\newtheorem{theorem}{Theorem}
\newtheorem{application}[theorem]{Application}
\newtheorem{condition}[theorem]{Condition}
\newtheorem{corollary}[theorem]{Corollary}
\newtheorem{criterion}[theorem]{Criterion}
\newtheorem{definition}[theorem]{Definition}
\newtheorem{example}[theorem]{Example}
\newtheorem{lemma}[theorem]{Lemma}
\newtheorem{notation}[theorem]{Notation}
\newtheorem{proposition}[theorem]{Proposition}
\newtheorem{remark}[theorem]{Remark}
\newenvironment{proof}[1][Proof]{\noindent\textbf{#1.} }{\ \rule{0.5em}{0.5em}}
\begin{document}

\title{Some characterizations of the preimage of $A_{\infty }$ for the
Hardy-Littlewood maximal operator and consequences}
\author{\'{A}lvaro Corval\'{a}n \\
acorvala@ungs.edu.ar}
\maketitle

\begin{abstract}
The purpose of this paper is to give some characterizations of the weight
functions $w$ such that $Mw\in A_{\infty }$. We show that for those weights
to be in $A_{\infty }$ ensures to be in $A_{1}$. We give a criterion in
terms of the local maximal functions $m_{\lambda }$ and we present a pair of
applications, one of them similar to the Coifman-Rochberg characterization
of $A_{1}$ but using functions of the form $\left( f^{\#}\right) ^{\delta }$
and $\left( m_{\lambda }u\right) ^{\delta }$ instead of $\left( Mf\right)
^{\delta }$.
\end{abstract}

INTRODUCTION

In this work we look at some characterizations of the weights $u$ such that $%
Mu\in A_{\infty }$. This question is mentioned as open in [CU-P] and that
paper refers the reader to [CU] for partial results for monotonic functions
in $\mathbb{R}$, and at our knowledge no previous work brings explicitly a
complete result. We will show that if for a weigth $u$ we have that $Mu\in
A_{\infty }$, actually we must have that $Mu\in A_{1}$. From a result due to
Neugebauer it is known that those weights can be characterized for a
pointwise condition for the maximal operator: $\left( M\left( u^{r}\right)
\left( x\right) \right) ^{\frac{1}{r}}\leq CMu\left( x\right) $ for some $%
C>0,r>1$ and $\forall x\in \mathbb{R}^{n}$, so it is immediately satisfied
for a weigth belonging to any reverse H\"{o}lder class -this means that $%
\left( u_{Q}^{r}\right) ^{\frac{1}{r}}\leq C\left( u_{Q}\right) $ for some $%
C>0,r>1$ and any cube $Q$ with sides parallel to the coordinate axes.
Notwithstanding, some weaker conditions, for instance: $u\in weak-A_{\infty
} $, allows to satisfy the condition of Neugebauer. We wil also present
another condition in terms of the size of sub-level sets, by means the use
of some useful pointwise inequalities found by A. Lerner, involving the
sharp maximal operator $u^{\#}$, the local maximal function $m_{\lambda
}\left( u\right) $ and the Hardy-Littlewood maximal operator $Mu$. The
resulting condition is weaker but quite similar to certain characterization
for $A_{\infty }$ weights -in [DMO] it is proven that this characterization,
equivalent to $A_{\infty }$ for standard cubes, is weaker, for general
bases, than most of the usual definitions for $A_{\infty }$ classes-. An
interesting consequence that we can derive from this result is a
characterization of the $A_{1}$ weights similar to the construction of
Coifman and Rochberg in terms of $k\left( x\right) \left( Mf\left( x\right)
\right) ^{\delta }$ -with $k$ and $k^{-1}$ belonging to $L^{\infty }$-, but
involving $u^{\#}$ and $m_{\lambda }\left( u\right) $ instead of $Mf\left(
x\right) $. As another consequence for those weights $u$ such that $Mu\in
A_{\infty }$ and hence $Mu\in A_{1}$ we can improve some known inequalities
for singular integral operators.

The weights belonging to $A_{\infty }$ can be described by several
conditions. In the reference [DMO] many of them are enumerated; all of them
are equivalent for the usual Muckenhoupt weights for the maximal operator
asociated with the bases of cubes whose sides are parallel to the cordinate
axes (or asociated with balls), but they that can provide different types of
weights for other bases. Here we deal with the usual bases of cubes (with
sides parallel to the coordinate axes) and the corresponding Muckenhoupt
weights. But we might translate some of the results for other bases for
which the following condition describe $A_{\infty }$ as the union of $A_{p}$
classes and for which it holds those properties that we use relating the
corresponding weights and the $A_{p}$ constants.

Summarizing the main results are:

\begin{proposition}
If $u$ is any weight, $Mu\in A_{\infty }\iff Mu\in A_{1}$
\end{proposition}

\begin{criterion}
Let $u$ a weight function in $\mathbb{R}^{n}$, $Mu\in A_{\infty }$ if and
only if there exists $s>1$ and $C_{0}>0$ such that $\left( Mu^{s}\right) ^{%
\frac{1}{s}}\left( x\right) \leq C_{0}.Mu\left( x\right) $.
\end{criterion}

\begin{criterion}
Let%
\'{}%
s $u$ a weight function, $Mu\in A_{\infty }$ if and only if for any $\lambda
\in \left( 0,1\right) $ it holds that $m_{\lambda }\left( Mu\right)
\thickapprox M\left( Mu\right) $
\end{criterion}

\begin{theorem}
Let $u$ a weight function. Then $Mu\in A_{\infty }$ if and only if (\ref%
{MUAINF}) holds, that is: 
\begin{equation*}
Mu\in A_{\infty }\iff \exists \alpha >0,\beta \in \left( 0,1\right)
:\left\vert \{y\in Q_{x}:Mu\left( y\right) \leq \alpha .\left( Mu\right)
_{Q}\}\right\vert \leq \beta .\left\vert Q_{x}\right\vert
\end{equation*}%
for almost every $x\in \mathbb{R}^{n}$ for some cube $Q_{x}\ni x$,$\ $and
for every cube $Q$ to which x belongs.
\end{theorem}

\begin{theorem}
(1) If $0<\delta <1$, $f\in L_{loc}^{1}\left( \mathbb{R}^{n}\right) $ and $%
u\in A_{1}$ and $c,d$ non-negative constants then $\left( c\cdot
f^{\#}\left( x\right) +d\cdot m_{\lambda }u\left( x\right) \right) ^{\delta
}\in A_{1}$.

(2) Conversely, if $w\in A_{1}$ then there are $f\in L_{loc}^{1}\left( 
\mathbb{R}^{n}\right) $, $u\in A_{1},$ non-negative constants $c\ $and $d$,
and $k\left( x\right) $with $k,k^{-1}\in L^{\infty }$ such that $w\left(
x\right) =k\left( x\right) \left( c.f^{\#}\left( x\right) ^{\delta
}+d.m_{\lambda }u\left( x\right) ^{\delta }\right) $.
\end{theorem}

\bigskip

PRELIMINARIES

Here $M$ is the (non-centered) Hardy-Littlewood maximal operator for the
bases of cubes with sides parallel to the co-ordinate axes; so if $f\in
L_{loc}^{1}\left( \mathbb{R}^{n}\right) $ we have:%
\begin{equation*}
Mf\left( x\right) =\sup_{Q\ni x}\frac{1}{\left\vert Q\right\vert }%
\int_{Q}f\left( z\right) dz
\end{equation*}

A weight $w$ is a non-negative locally integrable function in $\mathbb{R}%
^{n} $. A weight $w\in A_{p}$ class for $1<p<\infty $ if and only if 
\begin{equation*}
\lbrack A_{p}]:=\sup_{Q}\left( \frac{1}{\left\vert Q\right\vert }%
\int_{Q}w\right) \left( \frac{1}{\left\vert Q\right\vert }\int_{Q}w^{-\frac{1%
}{p-1}}\right) ^{p-1}<+\infty
\end{equation*}

A weight $w\in A_{1}$ if and only if 
\begin{equation*}
Mw\left( x\right) \leq Cw\left( x\right) \text{ }a.e.x\in \mathbb{R}^{n}
\end{equation*}%
and $[A_{1}]$ is the minimal constant $C$ such that this inequality occurs.

We will note $f\left( Q\right) =\int_{Q}f\left( x\right) dx$ and $f_{Q}=%
\frac{f\left( Q\right) }{\left\vert Q\right\vert }$.

We also recall the statement of an useful result due to Coifman, R. and
Rochberg, R. in characterizing $A_{1}$ weights:

\begin{theorem}
(1) Let $f\in L_{loc}^{1}\left( \mathbb{R}^{n}\right) $ be such that $%
Mf\left( x\right) <\infty $ a.e. and $0\leq \delta <1$, then $w\left(
x\right) =Mf\left( x\right) ^{\delta }\ $is in $A_{1}$. Also the $A_{1}$
constant depends only on $\delta $.

(2) Conversely, if $w\in A_{1}$ then there are $f\in L_{loc}^{1}\left( 
\mathbb{R}^{n}\right) $ and $k\left( x\right) $ with $k$ and $k^{-1}$ both
belonging to $L^{\infty }$ such that $w\left( x\right) =k\left( x\right)
Mf\left( x\right) ^{\delta }$.

The proof can be found in [D] (or see [C-R] for the original work), using a
suitable decomposition of f and Kolmogorov%
\'{}%
s inequality for proving (1). The point (2) is quite elementary.
\end{theorem}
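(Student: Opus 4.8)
The plan is to prove (1) by verifying the pointwise $A_{1}$ inequality $M\!\left(\left(Mf\right)^{\delta}\right)\left(x\right)\leq C\left(Mf\left(x\right)\right)^{\delta}$ for a.e.\ $x$, with $C=C\left(\delta,n\right)$ (the case $\delta=0$ is the trivial weight $w\equiv1$). Equivalently, I fix a cube $Q$ and aim to show $\frac{1}{\left\vert Q\right\vert}\int_{Q}\left(Mf\right)^{\delta}\leq C\inf_{Q}\left(Mf\right)^{\delta}$. First I would split $f=f_{1}+f_{2}$ with $f_{1}=f\chi_{2Q}$ and $f_{2}=f\chi_{\mathbb{R}^{n}\setminus 2Q}$, and use that for $0<\delta<1$ the map $t\mapsto t^{\delta}$ is subadditive, so that $\left(Mf\right)^{\delta}\leq\left(Mf_{1}\right)^{\delta}+\left(Mf_{2}\right)^{\delta}$. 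This reduces the whole estimate to handling the local piece and the tail piece separately.

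For the local piece I would invoke Kolmogorov's inequality together with the weak type $\left(1,1\right)$ bound for $M$: since $M$ is weak $\left(1,1\right)$, for $0<\delta<1$ one gets $\int_{Q}\left(Mf_{1}\right)^{\delta}\leq C_{\delta}\left\vert Q\right\vert^{1-\delta}\left\Vert f_{1}\right\Vert_{1}^{\delta}$, whence $\frac{1}{\left\vert Q\right\vert}\int_{Q}\left(Mf_{1}\right)^{\delta}\leq C_{\delta}\left(\frac{1}{\left\vert Q\right\vert}\int_{2Q}\left\vert f\right\vert\right)^{\delta}\leq C_{\delta}'\left(Mf\left(x\right)\right)^{\delta}$ for every $x\in Q$, because $2Q$ is a cube containing $x$. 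For the tail $f_{2}$, the key geometric observation is that $Mf_{2}$ is essentially constant on $Q$: any cube $R\ni z\in Q$ with $R\cap\left(\mathbb{R}^{n}\setminus 2Q\right)\neq\emptyset$ must have side comparable to that of $Q$, hence is contained in a fixed dilate $R'\supseteq Q$ with $\left\vert R'\right\vert\leq c_{n}\left\vert R\right\vert$; averaging $\left\vert f\right\vert$ over $R'$ then dominates $\frac{1}{\left\vert R\right\vert}\int_{R}\left\vert f_{2}\right\vert$ by $c_{n}Mf\left(x\right)$ for any $x\in Q$. Thus $Mf_{2}\left(z\right)\leq c_{n}\inf_{Q}Mf$, and the tail contributes at most $c_{n}^{\delta}\inf_{Q}\left(Mf\right)^{\delta}$. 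Adding the two bounds gives the $A_{1}$ inequality with constant depending only on $\delta$ and $n$. I expect this tail comparison to be the main obstacle, since it is where the geometry of the maximal operator genuinely enters; the local part is essentially automatic once Kolmogorov is in hand.

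For the converse (2) I would start from the reverse H\"{o}lder inequality satisfied by every $A_{1}$ weight: there exist $s>1$ and $C$ with $\left(\frac{1}{\left\vert Q\right\vert}\int_{Q}w^{s}\right)^{1/s}\leq C\frac{1}{\left\vert Q\right\vert}\int_{Q}w$ for all cubes $Q$. Setting $\delta=1/s\in\left(0,1\right)$ and $f=w^{s}=w^{1/\delta}$, I would first check that $w^{s}\in A_{1}$: the lower bound $M\!\left(w^{s}\right)\geq w^{s}$ a.e.\ is automatic, while for $x\in Q$ reverse H\"{o}lder followed by the $A_{1}$ condition gives $\left(w^{s}\right)_{Q}\leq C^{s}\left(w\right)_{Q}^{s}\leq C^{s}\left(Mw\left(x\right)\right)^{s}\leq C^{s}\left[A_{1}\right]^{s}w\left(x\right)^{s}$, so $M\!\left(w^{s}\right)\left(x\right)\leq C''w\left(x\right)^{s}$. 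Combining both directions, $w^{s}\leq M\!\left(w^{s}\right)\leq C''w^{s}$, and raising to the power $\delta=1/s$ yields $w\leq\left(Mf\right)^{\delta}\leq\left(C''\right)^{\delta}w$. Therefore $k:=w\cdot\left(Mf\right)^{-\delta}$ satisfies $\left(C''\right)^{-\delta}\leq k\leq1$, so $k,k^{-1}\in L^{\infty}$ and $w=k\left(Mf\right)^{\delta}$, as required.

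The one delicate point in (2) is quantifier placement: the admissible exponent $\delta$ is not arbitrary but is dictated by the reverse H\"{o}lder exponent $s$. The argument above produces the representation for $\delta=1/s$, and since $w\in RH_{s}$ implies $w\in RH_{p}$ for every $1<p\leq s$ (by Jensen's inequality), it in fact works for every $\delta\in\left[1/s,1\right)$. This explains why the statement asserts the existence of a suitable $\delta$ together with $f$ and $k$, and it is the only place where one must be careful not to claim the representation for every $\delta\in\left(0,1\right)$.
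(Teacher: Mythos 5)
Your proof is correct and takes precisely the route the paper itself indicates: the paper offers no proof of this theorem but cites [D] and [C--R] with the remark that (1) uses ``a suitable decomposition of $f$ and Kolmogorov's inequality'' while (2) is elementary, and your argument is exactly that standard proof --- the splitting $f=f\chi_{2Q}+f\chi_{\mathbb{R}^{n}\setminus 2Q}$ with Kolmogorov plus the weak $(1,1)$ bound on the local piece and the essentially-constant tail estimate for (1), and the reverse H\"{o}lder representation $f=w^{1/\delta}$, $k=w\cdot(Mf)^{-\delta}$ for (2). Your closing observation that the admissible $\delta$ in (2) is dictated by the reverse H\"{o}lder exponent $s$ (so the representation holds for $\delta\in[1/s,1)$ but not for arbitrary $\delta$) is a correct refinement, as is your note that the constant in (1) depends on $n$ as well as $\delta$.
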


We collect some known properties that we will use. The first of which can be
easily obtained using the definition of $A_{p}$ classes and the definition
of $[A_{p}]$ constants, and H\"{o}lder%
\'{}%
s inequality (see [D], for instance):

A) $A_{p}\subset A_{q}$ if $p<q$ and $[w]_{A_{q}}\leq \lbrack w]_{A_{p}}$.

B) $w\in A_{p}$ if and only if $w^{\frac{1}{1-p}}\in A_{\frac{1}{1-p}}$

C) If $w_{0},w_{1}\in A_{1}$ then $w_{0}w_{1}^{1-p}\in A_{p}$

Another property that we will need is the reciprocal of property C). That
property (P. Jones' Factorization Theorem) it%
\'{}%
s very much deeper than the previous (see for instance [S]).

D) If $w\in A_{p}$ there exists $w_{0},w_{1}\in A_{1}$ such that $%
w=w_{0}w_{1}^{1-p}$

Finally, one last property that we will need is:

E) If $w\in A_{p}$ there is $\alpha >1$ such that $w^{\alpha }\in A_{p}$

This latter property is usually proved by means the use of reverse H\"{o}%
lder inequalities that $A_{p}$ weights satisfy (see [D],[G] or [G-R]), but
it can be obtained easily from the Coifman-Rochberg construction: if $w\in
A_{1}$ by (2) is $w\left( x\right) ^{\alpha }=k\left( x\right) ^{\alpha
}Mf\left( x\right) ^{\delta \alpha }$ and taking $1<\alpha <\frac{1}{\delta }
$ we have from (1) that $Mf\left( x\right) ^{\delta \alpha }\in A_{1}$ and
then 
\begin{equation*}
Mw\left( x\right) ^{\alpha }\leq M\left( \left\Vert k\right\Vert _{\infty
}^{\alpha }Mf\left( x\right) ^{\delta \alpha }\right) \leq \lbrack \left(
Mf\right) ^{\delta }]_{A_{1}}\left\Vert k\right\Vert _{\infty }^{\alpha
}\left( Mf\left( x\right) ^{\delta \alpha }\right) \leq
\end{equation*}%
\begin{equation*}
\lbrack \left( Mf\right) ^{\delta }]_{A_{1}}\left\Vert k\right\Vert _{\infty
}^{\alpha }\left\Vert k^{-1}\right\Vert _{\infty }^{\alpha }k\left( x\right)
^{\alpha }Mf\left( x\right) ^{\delta \alpha }=[\left( Mf\right) ^{\delta
}]_{A_{1}}\left\Vert k\right\Vert _{\infty }\left\Vert k^{-1}\right\Vert
_{\infty }^{\alpha }w\left( x\right) ^{\alpha }
\end{equation*}%
So $w\left( x\right) ^{\alpha }\in A_{p}$ with $[w]_{A_{p}}\leq \lbrack
\left( Mf\right) ^{\delta }]_{A_{1}}\left\Vert k\right\Vert _{\infty
}\left\Vert k^{-1}\right\Vert _{\infty }^{\alpha }$. On the other hand for $%
p>1$ and $w\in A_{p}$ by property D) we have $w=w_{0}w_{1}^{1-p}$ with $%
w_{0},w_{1}\in A_{1}$ and for $j=0,1$ we write $w_{j}\left( x\right)
=k_{j}\left( x\right) Mf_{j}\left( x\right) ^{\delta _{j}}$ and for $%
1<\alpha <\min \left\{ \frac{1}{\delta _{j}}\right\} $ we have that $%
w_{0}^{\alpha },w_{1}^{\alpha }\in A_{1}$ and using C) we have that $%
w^{\alpha }=w_{0}^{\alpha }\left( w_{1}^{\alpha }\right) ^{1-p}\in A_{p}$.

By property A, the $A_{p}$ classes are nested, so it is well defined the
class $A_{\infty }=\dbigcup\limits_{p<\infty }A_{p}$.

A characterization of a weight $w$ for belonging to $A_{\infty }$ is the
following:%
\begin{equation}
w\in A_{\infty }\iff \exists \alpha ,\beta \in \left( 0,1\right) :\left\vert
\{y\in Q:w\left( y\right) \leq \alpha .w_{Q}\}\right\vert \leq \beta
.\left\vert Q\right\vert  \label{AINF}
\end{equation}%
for$\ $every cube $Q$ (see for instance [DMO] for this and other
characterizations for general bases).\ 

We will prove that for a weight $u$ there is a necessary and sufficient
condition for $Mu$ to belong to $A_{\infty }$ with a statement quite similar
to (\ref{AINF}). 
\begin{equation}
Mu\in A_{\infty }\iff \exists \alpha >0,\beta \in \left( 0,1\right)
:\left\vert \{y\in Q_{x}:Mu\left( y\right) \leq \alpha .\left( Mu\right)
_{Q}\}\right\vert \leq \beta .\left\vert Q_{x}\right\vert  \label{MUAINF}
\end{equation}%
for $x\in \mathbb{R}^{n}$ a.e. and for some cube $Q_{x}\ni x$,$\ $and for
every cube $Q$ to which x belongs.

\bigskip

SOME\ RESULTS

The first step is the following proposition that shows that if $Mu\in
A_{\infty }$ indeed $Mu\in A_{1}$, and then because $A_{1}\subset A_{\infty
} $ we have that $Mu\in A_{\infty }\iff Mu\in A_{1}$. So, what we have to do
is to characterize the weights $u$ such that $Mu\in A_{1}$.

\begin{remark}
Of course $A_{1}\subsetneqq A_{\infty }$, so there are weights $w$ such that 
$w\in A_{\infty }$ and $w\notin A_{1}$. The lemma tells us that being in $%
A_{\infty }$ is the same as being in $A_{1}$ for those weights $w$ such that 
$w=Mu$ for some weight $u$.
\end{remark}

\begin{proposition}
If $u$ is any weight, $Mu\in A_{\infty }\iff Mu\in A_{1}$
\end{proposition}

\begin{proof}
The implication $Mu\in A_{1}\implies Mu\in A_{\infty }$ is trivial because $%
A_{1}\subset A_{\infty }$.

It remains to show that if $Mu\in A_{\infty }\implies Mu\in A_{1}$.

If $Mu\in A_{\infty }=\dbigcup\limits_{p<\infty }A_{p}$, we have that $Mu\in
A_{p}$ for some $p\geq 1$. If $p=1$ there is nothing to prove. Let $p>1$.
Because the result of Coifman and Rochberg we have that $\left( Mu\right)
^{\delta }\in A_{1}$ for any $\delta $ with $0\leq \delta <1$ and any $u$
locally integrable but generally does not occur that $Mu\in A_{1}$, actually
we are in the process of proving that if we additionally have that $Mu\in
A_{p}$, in fact $Mu\in A_{1}$.

We need the following result (see, for instance, [Rudin, ej 5 d) Chap 3]):
For a measure space $\left( \Omega ,\mu \right) $ with measure $\mu \left(
\Omega \right) =1$ and $\left( \int_{\Omega }\left\vert f\right\vert
^{r}d\mu \right) ^{\frac{1}{r}}<\infty $ for some $r>0$, we have that 
\begin{equation*}
\lim\limits_{r\rightarrow 0^{+}}\left( \int_{\Omega }\left\vert f\right\vert
^{r}d\mu \right) ^{\frac{1}{r}}=\exp \left( \int_{\Omega }\log \left(
\left\vert f\right\vert \right) d\mu \right)
\end{equation*}%
.

Let's observe that using that $\mu \left( \Omega \right) =1$ and H\"{o}lder
Inequality we obtain $\left( \int_{\Omega }\left\vert f\right\vert
^{r_{1}}d\mu \right) ^{\frac{1}{r_{1}}}\geq \left( \int_{\Omega }\left\vert
f\right\vert ^{r_{2}}d\mu \right) ^{\frac{1}{r_{2}}}$ if $r_{1}\geq r_{2}$.
So for $r>0$ we have that 
\begin{equation*}
\left( \int_{\Omega }\left\vert f\right\vert ^{r}d\mu \right) ^{\frac{1}{r}%
}\geq \exp \left( \int_{\Omega }\log \left( \left\vert f\right\vert \right)
d\mu \right) =\lim\limits_{r\rightarrow 0^{+}}\left( \int_{\Omega
}\left\vert f\right\vert ^{r}d\mu \right) ^{\frac{1}{r}}
\end{equation*}%
.

Now for every $q>p$, using that 
\begin{equation*}
\sup\limits_{Q}\frac{Mu(Q)}{\left\vert Q\right\vert }\left( \frac{1}{%
\left\vert Q\right\vert }\int_{Q}Mu\left( x\right) ^{-\frac{1}{q-1}%
}dx\right) ^{q-1}=[Mu]_{A_{q}}\leq \lbrack Mu]_{A_{p}}
\end{equation*}%
(property A), we obtain that for any cube $Q:$ 
\begin{equation*}
\frac{Mu(Q)}{\left\vert Q\right\vert }\left( \frac{1}{\left\vert
Q\right\vert }\int_{Q}Mu\left( x\right) ^{-\frac{1}{q-1}}dx\right)
^{q-1}\leq \lbrack Mu]_{A_{p}}<\infty
\end{equation*}%
.

If $q$ tends to infinity then $\frac{1}{q-1}$ tends to $0^{+}$,$\ $so taking 
$r=\frac{1}{q-1}$ and applying the result from above for $f=w^{-1}$, $\Omega
=Q$ and $d\mu =\frac{dx}{\left\vert Q\right\vert }$, we have 
\begin{equation*}
\lim\limits_{q\rightarrow +\infty }\left( \frac{1}{\left\vert Q\right\vert }%
\int_{Q}Mu\left( x\right) ^{-\frac{1}{q-1}}dx\right) ^{q-1}=\exp \left(
\int_{Q}\log \left( Mu\left( x\right) ^{-1}\right) dx\right)
\end{equation*}%
\begin{equation*}
=\exp \left( \int_{Q}-\log \left( Mu\left( x\right) \right) dx\right) =\frac{%
1}{\exp \left( \int_{Q}\log \left( Mu\left( x\right) \right) dx\right) }
\end{equation*}%
. Taking limit in $\frac{Mu(Q)}{\left\vert Q\right\vert }\left( \frac{1}{%
\left\vert Q\right\vert }\int_{Q}Mu\left( x\right) ^{-\frac{1}{q-1}%
}dx\right) ^{q-1}\leq \lbrack Mu]_{A_{p}}$ we have that 
\begin{equation*}
\frac{Mu(Q)}{\left\vert Q\right\vert }\frac{1}{\exp \left( \int_{Q}\log
\left( Mu\left( x\right) \right) dx\right) }\leq \lbrack Mu]_{A_{p}}
\end{equation*}%
, so 
\begin{equation*}
\frac{Mu(Q)}{\left\vert Q\right\vert }\leq \lbrack Mu]_{A_{p}}\cdot \exp
\left( \int_{Q}\log \left( Mu\left( x\right) \right) dx\right)
\end{equation*}

Additionally, the observation from above applied for $f=Mu$ gives us that
for any $r>0$ it holds that 
\begin{equation*}
\left( \frac{1}{\left\vert Q\right\vert }\int_{Q}\left( Mu\right)
^{r}dx\right) ^{\frac{1}{r}}\geq \exp \left( \int_{Q}\log \left( Mu\left(
x\right) \right) dx\right)
\end{equation*}%
. Thus 
\begin{equation*}
\frac{Mu(Q)}{\left\vert Q\right\vert }\leq \lbrack Mu]_{A_{p}}\cdot \exp
\left( \int_{Q}\log \left( Mu\left( x\right) \right) dx\right) \leq \lbrack
Mu]_{A_{p}}\left( \frac{1}{\left\vert Q\right\vert }\int_{Q}\left\vert
Mu\right\vert ^{r}dx\right) ^{\frac{1}{r}}\,
\end{equation*}%
, and then 
\begin{equation*}
\frac{Mu(Q)}{\left\vert Q\right\vert }\leq \lbrack Mu]_{A_{p}}\left( \frac{1%
}{\left\vert Q\right\vert }\int_{Q}\left\vert Mu\right\vert ^{r}dx\right) ^{%
\frac{1}{r}}
\end{equation*}%
.

Taking $r=\delta $ with $0\leq \delta <1$ and using that for such $\delta $
it holds that $\left( Mu\right) ^{r}=\left( Mu\right) ^{\delta }\in A_{1}$
and then 
\begin{equation*}
\frac{1}{\left\vert Q\right\vert }\int_{Q}\left\vert Mu\right\vert
^{r}dx\leq \lbrack \left( Mu\right) ^{r}]_{A_{1}}\cdot \left( Mu\left(
x\right) \right) ^{r}
\end{equation*}%
a.e for every $x\in Q$.

So we have a.e for $x\in Q$ 
\begin{equation*}
\frac{Mu(Q)}{\left\vert Q\right\vert }\leq \lbrack Mu]_{A_{p}}\left( \frac{1%
}{\left\vert Q\right\vert }\int_{Q}\left\vert Mu\right\vert ^{r}dx\right) ^{%
\frac{1}{r}}
\end{equation*}%
\begin{equation*}
\leq \lbrack \left( Mu\right) ^{r}]_{A_{1}}\cdot \left( \lbrack \left(
Mu\right) ^{r}]_{A_{1}}\cdot \left( Mu\left( x\right) \right) ^{r}\right) ^{%
\frac{1}{r}}
\end{equation*}%
\begin{equation*}
=[\left( Mu\right) ^{r}]_{A_{1}}\cdot \left( \lbrack \left( Mu\right)
^{r}]_{A_{1}}\right) ^{\frac{1}{r}}\cdot \left( Mu\left( x\right) \right)
\end{equation*}%
.

Taking $C=[\left( Mu\right) ^{r}]_{A_{1}}\cdot \left( \lbrack \left(
Mu\right) ^{r}]_{A_{1}}\right) ^{\frac{1}{r}}$ independent of $Q$, for every 
$Q$ we obtain that 
\begin{equation*}
\frac{Mu(Q)}{\left\vert Q\right\vert }\leq C\cdot Mu\left( x\right)
\end{equation*}%
a.e for $x\in Q$.

Then almost everywhere for $x\in \mathbb{R}^{n}$ we have that 
\begin{equation*}
M\left( Mu\right) \left( x\right) =\sup\limits_{Q\ni x}\frac{Mu(Q)}{%
\left\vert Q\right\vert }\leq C\cdot Mu\left( x\right)
\end{equation*}%
, that is 
\begin{equation*}
M\left( Mu\right) \left( x\right) \leq C\cdot Mu\left( x\right)
\end{equation*}%
and then we obtain that $Mu\in A_{1}$.
\end{proof}

The previous proposition together with a lemma due to Neugebauer (published
in [CU]) enables us to give a characterization of all the weights $u$ such
that $Mu\in A_{\infty }$. Until a few years ago this was an open problem
with interesting consequences for improving some two-weight inequalities for
several operators, including maximal, vector-valued an Calderon-Zygmund ones
(see [CU-P]).

For completitude we transcribe below the lemma of Neugebauer and its easy
proof, in [CU] the lemma is considered in $\mathbb{R}$ but it works mutatis
mutandi for $\mathbb{R}^{n}$.

\begin{lemma}
(Neugebauer) For a weight $u$ it holds that $Mu\in A_{1}$ if and only if
there exists $s>1$ and $C_{0}>0$ such that $\left( Mu^{s}\right) ^{\frac{1}{s%
}}\left( x\right) \leq C_{0}.Mu\left( x\right) $
\end{lemma}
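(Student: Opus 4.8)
The plan is to prove the two implications separately. The forward implication is essentially immediate from the properties already collected, while the converse requires a local/global decomposition of the maximal function.

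For the direction $Mu\in A_{1}\Rightarrow \left( Mu^{s}\right) ^{1/s}\leq C_{0}Mu$, I would first invoke property E. Since $Mu\in A_{1}$, there is an exponent $s>1$ (the $\alpha$ furnished by E) with $\left( Mu\right) ^{s}\in A_{1}$, hence $M\!\left( (Mu)^{s}\right) (x)\leq \lbrack (Mu)^{s}]_{A_{1}}\,(Mu(x))^{s}$ a.e. Since $u(x)\leq Mu(x)$ a.e.\ (Lebesgue differentiation), raising to the power $s$ and applying $M$ gives $M(u^{s})\leq M\!\left( (Mu)^{s}\right)$ pointwise. Combining these and taking $s$-th roots yields $\left( M(u^{s})\right) ^{1/s}\leq \lbrack (Mu)^{s}]_{A_{1}}^{1/s}\,Mu$, so one takes $C_{0}=[(Mu)^{s}]_{A_{1}}^{1/s}$. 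This direction uses nothing beyond property E and monotonicity of $M$.

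For the converse $\left( M(u^{s})\right) ^{1/s}\leq C_{0}Mu\Rightarrow Mu\in A_{1}$, the goal is $M(Mu)(x)\leq C\,Mu(x)$ a.e. Fix a cube $Q$ and $x\in Q$; the aim is to bound the average $(Mu)_{Q}$ by $C\,Mu(x)$. The heart of the argument is a pointwise splitting of $Mu(y)$, for $y\in Q$, according to the side length of the competing cube $R\ni y$: large cubes with $\ell(R)\geq \ell(Q)$ satisfy $Q\subseteq 3R$, whence $u_{R}\leq 3^{n}Mu(x)$; small cubes with $\ell(R)<\ell(Q)$ satisfy $R\subseteq 3Q$, whence $u_{R}\leq M(u\chi_{3Q})(y)$. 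This produces $Mu(y)\leq M(u\chi_{3Q})(y)+3^{n}Mu(x)$ for every $y\in Q$. Averaging over $Q$, the global term contributes $3^{n}Mu(x)$; for the local term I would use the $L^{s}$-boundedness of $M$ (valid since $s>1$): by Jensen and $M\colon L^{s}\to L^{s}$, the average $\tfrac{1}{|Q|}\int_{Q}M(u\chi_{3Q})$ is at most $C_{s}\bigl(\tfrac{1}{|Q|}\int_{3Q}u^{s}\bigr)^{1/s}$. Since $\tfrac{1}{|Q|}\int_{3Q}u^{s}=3^{n}(u^{s})_{3Q}\leq 3^{n}M(u^{s})(x)$, the hypothesis bounds this by $C_{s}3^{n/s}C_{0}\,Mu(x)$. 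Summing gives $(Mu)_{Q}\leq C\,Mu(x)$ with $C$ independent of $Q$, and taking the supremum over $Q\ni x$ finishes the proof.

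The main obstacle is the converse, specifically carrying out the local/global split cleanly and verifying the geometric containments $Q\subseteq 3R$ and $R\subseteq 3Q$, so that the two regimes are controlled respectively by $Mu(x)$ and by the truncation $M(u\chi_{3Q})$. The restriction $s>1$ is essential precisely because $M$ fails to be bounded on $L^{1}$; this is what forces a genuine reverse-H\"{o}lder-type exponent $s>1$ rather than $s=1$, consistent with the fact that $\left( M(u^{s})\right) ^{1/s}$ and $Mu$ need not be comparable at $s=1$.
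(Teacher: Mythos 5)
Your proof is correct in both directions, but each half follows a genuinely different route from the paper's. For $Mu\in A_{1}\Rightarrow (M(u^{s}))^{1/s}\leq C_{0}\,Mu$, the paper invokes the reverse H\"{o}lder inequality satisfied by the $A_{1}$ weight $Mu$ and then ``takes suprema''; you instead use property E to get $(Mu)^{s}\in A_{1}$ for some $s>1$ and combine this with $u\leq Mu$ and the monotonicity of $M$. These are two faces of the same self-improvement phenomenon, but your version is arguably tidier inside this paper's framework: property E is proved there directly from Coifman--Rochberg without passing through RHI, and your chain makes explicit the steps ($u\leq Mu$, then the $A_{1}$ bound for $(Mu)^{s}$) that the paper's ``taking suprema over the cubes'' quietly relies on. For the converse, the paper is a three-line argument given Coifman--Rochberg: since $1/s<1$, $(M(u^{s}))^{1/s}\in A_{1}$, and then $M(Mu)\leq M((M(u^{s}))^{1/s})\leq C_{1}(M(u^{s}))^{1/s}\leq C_{1}C_{0}Mu$ using $Mu\leq (M(u^{s}))^{1/s}$. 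You instead unroll from scratch the special case of Coifman--Rochberg that is actually needed, via the local/global splitting $Mu(y)\leq M(u\chi_{3Q})(y)+3^{n}Mu(x)$ for $y\in Q\ni x$ together with the $L^{s}$-boundedness of $M$; the exponent $s>1$ from the hypothesis is exactly what lets you substitute $L^{s}$-boundedness for the Kolmogorov/weak-$(1,1)$ argument used in the general Coifman--Rochberg proof. The paper's route buys brevity at the cost of citing a nontrivial theorem; yours buys a self-contained argument with explicit constants and a transparent explanation of why $s$ must exceed $1$. If you write it up, two points deserve a sentence each: the containments $Q\subseteq 3R$ and $R\subseteq 3Q$ use only that $R\cap Q\neq\emptyset$ together with the stated comparison of side lengths, and the finiteness of $\int_{3Q}u^{s}$ (needed to apply $M\colon L^{s}\to L^{s}$ to $u\chi_{3Q}$) follows from the hypothesis at any point where $Mu<\infty$.
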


\begin{proof}
If such $s>1$ exists then $\frac{1}{s}<1$ and the Coifman-Rochberg
characterization of $A_{1}$ weights tells us that $\left( Mu^{s}\right) ^{%
\frac{1}{s}}$ is in $A_{1}$, so $M\left( \left( Mu^{s}\right) ^{\frac{1}{s}%
}\right) \leq C_{1}\left( Mu^{s}\right) ^{\frac{1}{s}}$, and using the
hypothesis and the fact that by H\"{o}lder: $Mu\leq \left( Mu^{s}\right) ^{%
\frac{1}{s}}$, we obtain $M\left( Mu\right) \leq M\left( \left(
Mu^{s}\right) ^{\frac{1}{s}}\right) \leq C_{1}.\left( Mu^{s}\right) ^{\frac{1%
}{s}}\leq C_{1}.C.Mu$, and then $M(Mu)\leq C.Mu$, that is $Mu\in A_{1}$.

Reciprocally if $Mu\in A_{1}$ then $Mu$ satisfies a reverse H\"{o}lder
inequality (RHI), that means that for some $s>1$ and $C>0$ it holds for any
cube $Q$%
\begin{equation*}
\left( \frac{1}{\left\vert Q\right\vert }\int_{Q}Mu^{s}\right) ^{\frac{1}{s}%
}\leq C.\frac{1}{\left\vert Q\right\vert }\int_{Q}Mu
\end{equation*}%
and taking suprema over the cubes we have: 
\begin{equation*}
\left( Mu^{s}\right) ^{\frac{1}{s}}\leq C.Mu
\end{equation*}
\end{proof}

As we have already mention the lemma and the proposition above, which says
that $Mu\in A_{\infty }$ if and only if it actually belongs to $A_{1}$,
provide us with the following characterization of the weights $u$ such that $%
Mu\in A_{\infty }$:

\begin{criterion}
Let $u$ a weight function in $\mathbb{R}^{n}$, $Mu\in A_{\infty }$ if and
only if there exists $s>1$ and $C_{0}>0$ such that $\left( Mu^{s}\right) ^{%
\frac{1}{s}}\left( x\right) \leq C_{0}.Mu\left( x\right) $.
\end{criterion}

Let%
\'{}%
s observe that with have got a bound for the constant $[Mu]_{A_{1}}$, that
is 
\begin{equation*}
\lbrack Mu]_{A_{1}}\leq \lbrack \left( Mu\right) ^{r}]_{A_{1}}\cdot \left(
\lbrack \left( Mu\right) ^{r}]_{A_{1}}\right) ^{\frac{1}{r}}
\end{equation*}%
.

Because the previous proposition the weights $u$ with $Mu$ in $A_{\infty }$
are those for which there are some $C>0$ such that 
\begin{equation*}
M(Mu)\left( x\right) \leq C\cdot Mu\left( x\right) \text{ a.e.}
\end{equation*}%
.

\bigskip

SOME\ FURTHER\ DEFINITIONS\ AND\ PROPERTIES

Now we will use some pointwise inequalities for certain maximal operators to
weaken the above condition. We need a couple of definitions:

\begin{definition}
If $f\in L_{loc}^{1}\left( \mathbb{R}^{n}\right) $ the sharp maximal
function of Fefferman-Stein $f^{\#}$ is defined by%
\begin{equation*}
f^{\#}\left( x\right) =\sup_{Q\ni x}\frac{1}{\left\vert Q\right\vert }%
\int_{Q}\left\vert f\left( x\right) -f_{Q}\right\vert dx
\end{equation*}
\end{definition}

\begin{definition}
$BMO\left( \mathbb{R}^{n}\right) =\{f\in L_{loc}^{1}\left( \mathbb{R}%
^{n}\right) :f^{\#}\in L^{\infty }\left( \mathbb{R}^{n}\right) \}$ is the
space of functions with bounded mean oscillation, and $\left\Vert
f\right\Vert _{BMO}=\left\Vert f^{\#}\right\Vert _{\infty }$.
\end{definition}

\begin{remark}
$\left\Vert {}\right\Vert _{BMO}$ is a seminorm for $BMO\left( \mathbb{R}%
^{n}\right) $ since $\left\Vert f^{\#}\right\Vert _{\infty }=0$ if and only
if $f$ is constant (a.e.). It is usual to identify $BMO$ with its quotient
with the class of almost everywhere constant functions and then $\left\Vert
{}\right\Vert _{BMO}$ becames a norm.
\end{remark}

\begin{notation}
For a measurable function $f:\mathbb{R}^{n}\longrightarrow \mathbb{R}$, the
non-increasing rearrangement of $f$ is $f^{\ast }$. That is, for $t\geq 0$%
\begin{equation*}
f^{\ast }\left( t\right) =\inf \{\alpha >0:\left\vert \left\{ x\in \mathbb{R}%
^{n}:\left\vert f\left( x\right) \right\vert >\alpha \right\} \right\vert
\leq t\}
\end{equation*}%
. We use the convention that $\inf \emptyset =\infty $.
\end{notation}

\begin{remark}
An equivalent way to define $f^{\ast }\left( t\right) $ is 
\begin{equation*}
f^{\ast }\left( t\right) =\sup_{\left\vert E\right\vert =t}\inf_{x\in
E}\left\vert f\left( x\right) \right\vert
\end{equation*}%
where $E$ are measurable sets.
\end{remark}

\begin{remark}
Non-increasing rearrangements of functions from measure spaces $\left( X,\mu
\right) $ can be defined in the same way replacing $\mathbb{R}^{n}$ by $X$
and the Lebesgue measure $\left\vert {}\right\vert $ by $\mu $. Much more
details and results can be found in [BS].
\end{remark}

\begin{definition}
If $f$ is a measurable function and $\lambda \in \left( 0,1\right) $ the
local maximal functions $m_{\lambda }\left( f\right) $ are defined by 
\begin{equation*}
m_{\lambda }f\left( x\right) =\sup_{Q\ni x}\left( f\chi _{Q}\right) ^{\ast
}\left( \lambda \left\vert Q\right\vert \right)
\end{equation*}
\end{definition}

Let's point out some basic properties of $f^{\ast }$, $m_{\lambda }f\left(
x\right) $, and $f^{\#}$, immediate from their definitions:

(i) $f^{\#}\left( x\right) \leq 2Mf\left( x\right) $

(ii) If $c>0$ then $\left( c.f\right) ^{\ast }\left( t\right) =c.\left(
f\right) ^{\ast }\left( t\right) $

(iii) If $f\left( x\right) \geq g\left( x\right) $ $a.e.$ then $f^{\ast
}\left( t\right) \geq g^{\ast }\left( t\right) $ for every $t$.

(iv) Using iii) if $f\left( x\right) \geq g\left( x\right) $ $a.e.$ then $%
m_{\lambda }\left( f\right) \left( x\right) \geq m_{\lambda }\left( g\right)
\left( x\right) $ everywhere.

(v) If $c>0$ using ii) we have $m_{\lambda }\left( c.f\right) \left(
x\right) =c.m_{\lambda }\left( f\right) \left( x\right) $.

We will also need the somewhat less trivial inequalities:

\begin{lemma}
(vi) $m_{\lambda }\left( f\right) \left( x\right) \geq \left\vert f\left(
x\right) \right\vert $ that holds at every Lebesgue point of $f$, so a.e. if 
$f\in L_{loc}^{1}\left( \mathbb{R}^{n}\right) $.
\end{lemma}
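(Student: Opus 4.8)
The plan is to establish the inequality at an arbitrary Lebesgue point $x$ of $f$; since almost every point is a Lebesgue point when $f\in L_{loc}^{1}(\mathbb{R}^{n})$, this yields the ``a.e.'' conclusion. Fixing such an $x$ and an arbitrary $\delta>0$, it suffices to exhibit a single cube $Q\ni x$ with $(f\chi_Q)^{\ast}(\lambda|Q|)\geq |f(x)|-\delta$, because then $m_{\lambda}f(x)=\sup_{Q\ni x}(f\chi_Q)^{\ast}(\lambda|Q|)\geq |f(x)|-\delta$, and letting $\delta\to 0^{+}$ gives $m_{\lambda}f(x)\geq |f(x)|$.

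For the lower bound on the rearrangement I would use the sup-inf description recorded above, namely $(f\chi_Q)^{\ast}(\lambda|Q|)=\sup_{|E|=\lambda|Q|}\inf_{y\in E}|f\chi_Q(y)|$. Since $f\chi_Q$ vanishes off $Q$, it is enough to produce a measurable set $E\subset Q$ with $|E|=\lambda|Q|$ on which $|f|\geq |f(x)|-\delta$; for such an $E$ we have $\inf_{y\in E}|f\chi_Q(y)|=\inf_{y\in E}|f(y)|\geq |f(x)|-\delta$, and the supremum defining the rearrangement is therefore at least $|f(x)|-\delta$.

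To obtain this set I would quantify the Lebesgue point condition. Because $x$ is a Lebesgue point, $\frac{1}{|Q|}\int_Q|f(y)-f(x)|\,dy\to 0$ as the cube $Q\ni x$ shrinks to $x$, so by Chebyshev's inequality the ``bad'' set satisfies
\[
|\{y\in Q:|f(y)-f(x)|>\delta\}|\leq \frac{1}{\delta}\int_Q|f(y)-f(x)|\,dy=o(|Q|).
\]
Hence for every sufficiently small $Q\ni x$ this bad set has measure strictly below $(1-\lambda)|Q|$, and so the good set $G=\{y\in Q:|f(y)-f(x)|\leq\delta\}$ has $|G|>\lambda|Q|$. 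On $G$ the reverse triangle inequality gives $|f(y)|\geq |f(x)|-\delta$, and since $|G|>\lambda|Q|$ I may choose any measurable $E\subset G$ with $|E|=\lambda|Q|$, completing the construction.

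The one point requiring care is the interplay of the strict and non-strict inequalities, and in particular the role of $\lambda\in(0,1)$: the strict inequality $\lambda<1$ is precisely what guarantees the positive slack $(1-\lambda)|Q|$ into which the vanishing bad set can be absorbed, so that a good set of measure exceeding $\lambda|Q|$ always exists for small cubes. I do not anticipate a serious obstacle beyond this bookkeeping; the proof is essentially a translation of the Lebesgue differentiation property into a statement about super-level sets of $f$ on small cubes, and the parameter $\delta$ can be sent to $0^{+}$ without complication.
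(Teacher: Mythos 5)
Your proof is correct, but it runs in the opposite direction from the paper's. The paper starts from $m_{\lambda }f\left( x\right) $ and claims that for every $\tau <1$ the set $\{y\in Q:\left\vert f\left( y\right) \right\vert >\tau .m_{\lambda }f\left( x\right) \}$ has measure at least $\lambda \left\vert Q\right\vert $ \emph{for every} cube $Q\ni x$; it then observes that the complementary sets $E_{i}=\{y\in Q_{(x,r_{i})}:\left\vert f\left( y\right) \right\vert \leq \tau .m_{\lambda }f\left( x\right) \}$ shrink to $x$ nicely and invokes the Lebesgue differentiation theorem for nicely shrinking sets (Rudin, Thm.\ 7.10) to get $\left\vert f\left( x\right) \right\vert \leq \tau .m_{\lambda }f\left( x\right) $. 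You instead start from $\left\vert f\left( x\right) \right\vert $ and use the quantitative Lebesgue-point condition plus Chebyshev to show that on small cubes the set where $\left\vert f\right\vert \geq \left\vert f\left( x\right) \right\vert -\delta $ has measure exceeding $\lambda \left\vert Q\right\vert $, which bounds the rearrangement from below directly. Your route is more elementary (no nicely-shrinking-sets machinery) and, notably, it sidesteps a delicate point in the paper's argument: the paper's ``$\forall Q\ni x$'' assertion follows from the definition of $m_{\lambda }$ only for cubes that nearly attain the supremum defining $m_{\lambda }f\left( x\right) $, not for the small concentric cubes $Q_{(x,r_{i})}$ actually used afterwards (take $f=\chi _{\lbrack 0,1]}$ and a very large $Q$ to see it fail in general), whereas your argument needs nothing of the sort. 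One small point of hygiene: rather than passing through the $\sup \inf $ description and a set $E$ with $\left\vert E\right\vert =\lambda \left\vert Q\right\vert $ exactly (where the $\inf $- and $\sup \inf $-definitions of $f^{\ast }$ can disagree at a jump), it is cleaner to note that your good set $G$ satisfies $\left\vert G\right\vert >\lambda \left\vert Q\right\vert $ strictly, so every $\alpha <\left\vert f\left( x\right) \right\vert -\delta $ has $\left\vert \{\left\vert f\chi _{Q}\right\vert >\alpha \}\right\vert >\lambda \left\vert Q\right\vert $ and hence $\left( f\chi _{Q}\right) ^{\ast }\left( \lambda \left\vert Q\right\vert \right) \geq \left\vert f\left( x\right) \right\vert -\delta $ directly from the infimum definition; your strict inequality already provides exactly the slack needed, so this is cosmetic.
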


\begin{proof}
We will need to remember a definition and a known result of Real Analysis.
The definition is the following: a sequence $\{E_{i}\}_{i\in \mathbb{N}}$ of
Borel sets of $\mathbb{R}^{n}$ is said to shrink to $x$ nicely if there is a
number $\alpha >0$ such that there is a sequence of cubes of $\mathbb{R}^{n}$
centered at $x$ of radii $r_{i}\rightarrow 0$, $\{Q_{\left( x,r_{i}\right)
}\}_{i\in \mathbb{N}}$, such that $E_{i}\subset Q_{\left( x,r_{i}\right) }$
and $\left\vert E_{i}\right\vert \geq \alpha .\left\vert Q_{\left(
x,r_{i}\right) }\right\vert $. The result is: if $x\in \mathbb{R}^{n}$ is a
Lebesgue point of $f\in L_{loc}^{1}\left( \mathbb{R}^{n}\right) $ and $%
\{E_{i}\}_{i\in \mathbb{N}}$ is a sequence of sets that shrinks to $x$
nicely then 
\begin{equation*}
f\left( x\right) =\lim\limits_{i\rightarrow \infty }\frac{1}{\left\vert
E_{i}\right\vert }\int_{E_{i}}f\left( z\right) dz
\end{equation*}%
.(see [Rudin], theorem 7.10 -changing cubes for balls and $f\in
L_{loc}^{1}\left( \mathbb{R}^{n}\right) $ instead of $f\in L^{1}\left( 
\mathbb{R}^{n}\right) $ the proof still works-).

Now for any positive $\tau $ with $\tau <1$, using the definitions of
non-increasing rearrangements and $m_{\lambda }$ we have that 
\begin{equation*}
\forall Q\ni x:\left\vert \{y\in Q:\left\vert f\left( y\right) \right\vert
>\tau .m_{\lambda }f\left( x\right) \}\right\vert \geq \lambda \left\vert
Q\right\vert
\end{equation*}%
. So if we take $r_{i}=\frac{1}{i}\rightarrow 0$ and we name 
\begin{equation*}
\{E_{i}\}_{i\in \mathbb{N}}=\{y\in Q_{\left( x,r_{i}\right) }:\left\vert
f\left( y\right) \right\vert \leq \tau .m_{\lambda }f\left( x\right) \}
\end{equation*}%
then 
\begin{equation*}
E_{i}=Q_{\left( x,r_{i}\right) }\setminus \{y\in Q_{\left( x,r_{i}\right)
}:\left\vert f\left( y\right) \right\vert >\tau .m_{\lambda }f\left(
x\right) \}
\end{equation*}
and we obtain that 
\begin{equation*}
\left\vert E_{i}\right\vert =\left\vert Q_{\left( x,r_{i}\right) }\setminus
\{y\in Q_{\left( x,r_{i}\right) }:\left\vert f\left( y\right) \right\vert
>\tau .m_{\lambda }f\left( x\right) \}\right\vert \geq Q_{\left(
x,r_{i}\right) }-\lambda \left\vert Q_{\left( x,r_{i}\right) }\right\vert
\end{equation*}%
that is%
\begin{equation*}
\left\vert E_{i}\right\vert \geq \left( 1-\lambda \right) .\left\vert
Q_{\left( x,r_{i}\right) }\right\vert
\end{equation*}%
and then $\{E_{i}\}_{i\in \mathbb{N}}$ is a sequence of sets that shrinks to 
$x$ nicely. But now, with these sets $E_{i}$ we can apply the mentioned
result for any Lebesgue point to obtain: 
\begin{equation*}
f\left( x\right) =\lim\limits_{i\rightarrow \infty }\frac{1}{\left\vert
E_{i}\right\vert }\int_{E_{i}}f\left( z\right) dz\leq
\lim\limits_{i\rightarrow \infty }\frac{1}{\left\vert E_{i}\right\vert }%
\int_{E_{i}}\tau .m_{\lambda }f\left( x\right) dz
\end{equation*}%
and using $\left\vert f\left( x\right) \right\vert $ instead of $f\left(
x\right) :$%
\begin{equation*}
\left\vert f\left( x\right) \right\vert \leq \lim\limits_{i\rightarrow
\infty }\frac{\tau .m_{\lambda }f\left( x\right) }{\left\vert
E_{i}\right\vert }\int_{E_{i}}dz=\lim\limits_{i\rightarrow \infty }\frac{%
\tau .m_{\lambda }f\left( x\right) }{\left\vert E_{i}\right\vert }\left\vert
E_{i}\right\vert =\tau .m_{\lambda }f\left( x\right)
\end{equation*}%
Then 
\begin{equation*}
\left\vert f\left( x\right) \right\vert \leq \tau .m_{\lambda }f\left(
x\right)
\end{equation*}%
$\forall \tau <1$, and taking limit for $\tau \rightarrow 1^{-}$ we obtain: 
\begin{equation*}
\left\vert f\left( x\right) \right\vert \leq m_{\lambda }f\left( x\right)
\end{equation*}%
for every Lebesgue point of $f$ and then almost everywhere.
\end{proof}

(vii) For any $\lambda \in \left( 0,1\right) $ there is a constant $%
c_{\lambda ,n}$ (depending only of $\lambda $ and $n$) such that for all $%
u\in L_{loc}^{1}$ and $x\in \mathbb{R}^{n}$ we have ([L]):%
\begin{equation*}
m_{\lambda }\left( Mu\right) \left( x\right) \leq c_{\lambda
,n}.u^{\#}\left( x\right) +Mu\left( x\right)
\end{equation*}

(viii) Observe that using vii) and aplying ii) to $f=Mu$ we obtain $%
m_{\lambda }\left( Mu\right) \left( x\right) \leq c.Mu\left( x\right) $ a.e.
for some $c>0$.

(ix) $m_{\lambda }\left( Mu\right) $ and $Mu$ are pointwise equivalent a.e.
(we will write $m_{\lambda }\left( Mu\right) \thickapprox Mu$ for that
situation) that is that is there are positive constants $A$ and $B$ such
that $m_{\lambda }(Mu)\left( x\right) \leq A.Mu\left( x\right) $ and $%
Mu\left( x\right) \leq B.m_{\lambda }(Mu\left( x\right) )$ a.e., we obtain
this taking $A=c$ in viii), and $B=1$ in vi).

(x) It's immediate from the definition of $M$ that $Mf\left( x\right) \geq
f\left( x\right) $ a.e.

(xi) We will also use a pointwise inequality (see [L2]) that goes in the
opposite direction of vii): for any $u\in L_{loc}^{1}$ and $x\in \mathbb{R}%
^{n}$ we have :

\bigskip

SOME\ MORE\ RESULTS

Another criterion for characterization of the weights $u$ such that $Mu\in
A_{\infty }$ follows from our Proposition 1 and from inequality vii) :

\begin{criterion}
Let%
\'{}%
s $u$ a weight function, $Mu\in A_{\infty }$ if and only if for any $\lambda
\in \left( 0,1\right) $ it holds that $m_{\lambda }\left( Mu\right)
\thickapprox M\left( Mu\right) $.
\end{criterion}

\begin{proof}
By Proposition 1 we have $Mu\in A_{\infty }\iff Mu\in A_{1}$, so $Mu\in
A_{\infty }$ if and only if there is some $C>0:M\left( Mu\right) \left(
x\right) \leq C.Mu\left( x\right) $ a.e. and using that $M\left( f\right)
\left( x\right) \geq f\left( x\right) $ a.e. for $f\in L_{loc}^{1}$ we have
that $M\left( Mu\right) \left( x\right) \geq Mu\left( x\right) $ a.e., and
then ix) gives us that $Mu\in A_{\infty }\iff Mu\in A_{1}\iff Mu\thickapprox
M\left( Mu\right) \iff m_{\lambda }\left( Mu\right) \thickapprox M\left(
Mu\right) $.
\end{proof}

\begin{remark}
We can observe that it is enough that $m_{\lambda }\left( Mu\right)
\thickapprox M\left( Mu\right) $ for some $\lambda \in \left( 0,1\right) $
to obtain that $Mu\in A_{\infty }$ and then $m_{\lambda }\left( Mu\right)
\thickapprox M\left( Mu\right) $ for every $\lambda \in \left( 0,1\right) $.
\end{remark}

\begin{remark}
Because of viii) for any $u$ we always can ensure for a suitable $%
c>0:m_{\lambda }\left( Mu\right) \left( x\right) \leq c.Mu\left( x\right)
\leq c.M\left( Mu\right) \left( x\right) $, that is $m_{\lambda }\left(
Mu\right) \left( x\right) \leq c.M\left( Mu\right) \left( x\right) $ a.e.;
thus, by the criterion above, a condition necessary and sufficient, on $u$,
for $Mu$ to belong to $A_{\infty }$ is the existence of a constant $C>0$
such that $M\left( Mu\right) \left( x\right) \leq C.m_{\lambda }\left(
Mu\right) \left( x\right) $ a.e.
\end{remark}

\ As we mentioned in the introduction now we want to prove that (\ref{MUAINF}%
) is a necessary and sufficient condition on a weight $u$ for $Mu$ to be in $%
A_{\infty }$.

A condition like (\ref{MUAINF}) but applied for an arbitrary weight $w$
instead of $Mu$ is weaker than (\ref{AINF}), that is, if $w\in A_{\infty }$
then $w$ satifies the following:

\begin{condition}[LocalAINF]
$\exists \alpha _{1}>0,\beta _{1}\in \left( 0,1\right) $ such that for
almost every $x\in \mathbb{R}^{n}$ exists a cube $Q_{x}\ni x$ that $\forall
Q\ni x$ verifies that:$\left\vert \{y\in Q_{x}:w\left( y\right) \leq \alpha
_{1}.w_{Q}\}\right\vert \leq \beta _{1}.\left\vert Q_{x}\right\vert $
\end{condition}

To see this implication let%
\'{}%
s remember that $w\in A_{\infty }$ if and only if $w$ satisfies:

\begin{condition}[CAINF]
$\exists \alpha ,\beta \in \left( 0,1\right) :\forall Q$ cube we have $%
\left\vert \{y\in Q:w\left( y\right) \leq \alpha .w_{Q}\}\right\vert \leq
\beta .\left\vert Q\right\vert $
\end{condition}

Now, if $w\in A_{\infty }$ we fix some $k\in \left( 0,1\right) $, for
instance $k=\frac{1}{2}$, and for any $x$ we take a cube $Q_{x}\ni x$ such
that $w_{Q_{x}}=\frac{w\left( Q_{x}\right) }{\left\vert Q_{x}\right\vert }%
\geq k.Mw\left( x\right) $. So let $\alpha _{1}=\alpha .k$ and for any $Q\ni
x$ we have that 
\begin{equation*}
\{y\in Q_{x}:w\left( y\right) \leq \alpha _{1}.w_{Q}\}\subset \{y\in
Q_{x}:w\left( y\right) \leq \alpha _{1}.Mw\left( x\right) \}
\end{equation*}%
\begin{equation*}
\subset \{y\in Q_{x}:w\left( y\right) \leq \frac{\alpha _{1}}{k}\frac{%
w\left( Q_{x}\right) }{\left\vert Q_{x}\right\vert }\}
\end{equation*}%
then applying the previous condition to $Q_{x}$ 
\begin{equation*}
\left\vert \{y\in Q_{x}:w\left( y\right) \leq \alpha _{1}.w_{Q}\}\right\vert
\leq \left\vert \{y\in Q_{x}:w\left( y\right) \leq \frac{\alpha _{1}}{k}%
.w_{Q_{x}}\}\right\vert
\end{equation*}%
\begin{equation*}
=\left\vert \{y\in Q_{x}:w\left( y\right) \leq \alpha
.w_{Q_{x}}\}\right\vert \leq \beta .\left\vert Q_{x}\right\vert
\end{equation*}%
so the condition (LocalAINF) is fulfilled with $\alpha _{1}=\alpha .k$, $%
\beta _{1}=\beta $ and the $Q_{x}$ selected for which $\frac{w\left(
Q_{x}\right) }{\left\vert x\right\vert }\geq k.Mw\left( x\right) $.

Then we have that it also holds:

Although the condition (LocalAINF) is weaker than $A_{\infty }$ for a
general weight when it is applied to a weight that is the maximal function
of another weight, that is if $w=Mu$ then the condition (LocalAINF) implies $%
A_{\infty }$, so they are equivalent conditions for $Mu$ weights.

\begin{theorem}
Let $u$ a weight function. Then $Mu\in A_{\infty }$ if and only if (\ref%
{MUAINF}) holds, that is: 
\begin{equation*}
Mu\in A_{\infty }\iff \exists \alpha >0,\beta \in \left( 0,1\right)
:\left\vert \{y\in Q_{x}:Mu\left( y\right) \leq \alpha .\left( Mu\right)
_{Q}\}\right\vert \leq \beta .\left\vert Q_{x}\right\vert
\end{equation*}%
for almost every $x\in \mathbb{R}^{n}$ for some cube $Q_{x}\ni x$,$\ $and
for every cube $Q$ to which x belongs.
\end{theorem}

\begin{proof}
Because the previous remark $Mu\in A_{\infty }$ if and only if there exists
a positive constant $B$ and $\lambda \in \left( 0,1\right) $ $:$%
\begin{equation}
M(Mu)\left( x\right) \leq B.m_{\lambda }(Mu\left( x\right) )  \label{MUAINFB}
\end{equation}%
a.e. So to guarantee $Mu\in A_{\infty }$ is equivalent to have. 
\begin{equation}
\alpha .M(Mu)\left( x\right) \leq m_{\lambda }(Mu\left( x\right) )
\label{MUAINFC}
\end{equation}%
for some $\alpha >0$ and almost every $x\in \mathbb{R}^{n}$. Now using the
definition of $m_{\lambda }$ we have that (\ref{MUAINFC}) is equivalent to
say that for almost every $x\in \mathbb{R}^{n}$ 
\begin{equation*}
\exists Q_{x}\ni x:\left( Mu.\chi _{Q_{x}}\right) ^{\ast }\left( \lambda
.\left\vert Q_{x}\right\vert \right) \geq \alpha .\left( Mu\right) _{Q}
\end{equation*}%
for every cube $Q\ni x$. Now by the definition of non-increasing
rearrangements this means that for a.e. $x\in \mathbb{R}^{n}$ 
\begin{equation*}
\exists Q_{x}\ni x:\left\vert \{y\in Q_{x}:Mu\left( y\right) >\alpha .\left(
Mu\right) _{Q}\}\right\vert >\lambda .\left\vert Q_{x}\right\vert
\end{equation*}%
for every cube $Q\ni x$, or, taking complements respect $Q_{x}$ and naming $%
\beta =\left( 1-\lambda \right) \in \left( 0,1\right) $, we have that (\ref%
{MUAINFB}) and therefore $Mu\in A_{\infty }$ is equivalent to the existence
of $\alpha >0,\beta \in \left( 0,1\right) $ such that for almost every $x\in 
\mathbb{R}^{n}$ there is some $Q_{x}\ni x:$%
\begin{equation*}
\exists Q_{x}\ni x:\left\vert \{y\in Q_{x}:Mu\left( y\right) \leq \alpha
.\left( Mu\right) _{Q}\}\right\vert \leq \beta .\left\vert Q_{x}\right\vert
\end{equation*}%
for every cube $Q\ni x$.
\end{proof}

\begin{example}
It%
\'{}%
s easy to see that a class of weights functions $u$ such that $Mu\in
A_{\infty }$ is the class $A_{\infty }$ itself, that is $M\left( A_{\infty
}\right) \subset A_{\infty },$ and by our first proposition in fact $M\left(
A_{\infty }\right) \subset A_{1}$. Indeed we can provide an elementary proof
of this using the previous theorem and the characterization (\ref{AINF}) of $%
A_{\infty }$ weights: We fix some $k\in \left( 0,1\right) $, and for any $x$
we take a cube $Q_{x}$ such that $\frac{Mu\left( Q_{x}\right) }{\left\vert
Q_{x}\right\vert }\geq k.M\left( Mu\right) \left( x\right) $; because (\ref%
{AINF}) and the fact that $u\in A_{\infty }$ we have $\alpha _{1},\beta
_{1}\ $such that for any cube $\widetilde{Q}$ it holds: $\left\vert \{y\in 
\widetilde{Q}:u\left( y\right) \leq \alpha _{1}.u_{\widetilde{Q}%
}\}\right\vert \leq \beta _{1}.\left\vert \widetilde{Q}\right\vert $. Then
for $\widetilde{Q}=Q_{x}$, $\alpha =\frac{\alpha _{1}}{k}$, $\beta =\beta
_{1}$ and for any $Q\ni x$, and using the trivial inclusions due to the
inequalities $\frac{Mu\left( Q_{x}\right) }{\left\vert Q_{x}\right\vert }%
\geq k.M\left( Mu\right) \left( x\right) $; $MMu\left( z\right) \geq
Mu\left( z\right) $ a.e. and $Mu\left( z\right) \geq u\left( z\right) $ a.e.
we get:%
\begin{equation*}
\left\vert \{y\in Q_{x}:Mu\left( y\right) \leq \alpha .\frac{Mu\left(
Q\right) }{\left\vert Q\right\vert }\}\right\vert \leq \left\vert \{y\in
Q_{x}:Mu\left( y\right) \leq \alpha .\frac{MMu\left( Q\right) }{\left\vert
Q\right\vert }\}\right\vert \leq
\end{equation*}%
\begin{equation*}
\leq \left\vert \{y\in Q_{x}:Mu\left( y\right) \leq \alpha .M\left(
Mu\right) \left( x\right) \}\right\vert \leq \left\vert \{y\in Q_{x}:u\left(
y\right) \leq \alpha .M\left( Mu\right) \left( x\right) \}\right\vert
\end{equation*}%
\begin{equation*}
\leq \left\vert \{y\in Q_{x}:u\left( y\right) \leq \frac{\alpha }{k}.\frac{%
Mu\left( Q_{x}\right) }{\left\vert Q_{x}\right\vert }\}\right\vert \leq
\beta .\left\vert Q_{x}\right\vert
\end{equation*}%
that is we have%
\begin{equation*}
\left\vert \{y\in Q_{x}:Mu\left( y\right) \leq \frac{\alpha }{k}.\frac{%
Mu\left( Q\right) }{\left\vert Q\right\vert }\}\right\vert \leq \beta
.\left\vert Q_{x}\right\vert
\end{equation*}
\end{example}

\begin{example}
Actually for those functions there are shorter way to prove that $Mu\in
A_{1}:$ Because the H\"{o}lder's inequality we have that for all $r>1:$%
\begin{equation*}
\frac{1}{\left\vert Q\right\vert }\int_{Q}u\left( x\right) \leq \left( \frac{%
1}{\left\vert Q\right\vert }\int_{Q}u^{r}\left( x\right) \right) ^{\frac{1}{r%
}}
\end{equation*}%
, and taking suprema 
\begin{equation*}
Mu\left( x\right) \leq \left( M\left( u^{r}\right) \left( x\right) \right) ^{%
\frac{1}{r}}
\end{equation*}%
. Now for the Coifman-Rochberg characterization of $A_{1}$ weights for any
locally integrable function $g$ and $\delta \in \lbrack 0,1)$ we have that $%
Mg\left( x\right) ^{\delta }\in A_{1}$ and then $\left( M\left( u^{r}\right)
\left( x\right) \right) ^{\frac{1}{r}}\in A_{1}$, therefore for some
constant $C>1:$ 
\begin{equation*}
MMu\left( x\right) \leq M\left( \left( M\left( u^{r}\right) \left( x\right)
\right) ^{\frac{1}{r}}\right) \leq C.\left( M\left( u^{r}\right) \left(
x\right) \right) ^{\frac{1}{r}}
\end{equation*}%
a.e. But if $u\in A_{\infty }$ then $u\in A_{p}$ for some $p\geq 1$, and
then it satisfy a reverse H\"{o}lder inequality (see [D]) for some $r>1$,
that is 
\begin{equation*}
\left( \frac{1}{\left\vert Q\right\vert }\int_{Q}u^{r}\left( x\right)
\right) ^{\frac{1}{r}}\leq C.\frac{1}{\left\vert Q\right\vert }%
\int_{Q}u\left( x\right)
\end{equation*}%
for certain $C>0$, thus 
\begin{equation*}
\left( M\left( u^{r}\right) \left( x\right) \right) ^{\frac{1}{r}}\leq
C.Mu\left( x\right)
\end{equation*}%
and then 
\begin{equation*}
MMu\left( x\right) \leq C.Mu\left( x\right)
\end{equation*}%
a.e. That is $Mu\in A_{1}$. We remark that this way requires two strong
results: characterization of $A_{1}$ and the reverse H\"{o}lder inequality
for $A_{p}$ weights, while proposition 1 is elementary.
\end{example}

\begin{example}
A larger class of weights that $M$ sends to $A_{1}$ are the $weak-A_{\infty
} $ weights.

We recall that $u\in A_{\infty }$ if and only if there exists positive
constants $C$ and $\delta $ such that for any cube $Q$ and any measurable $%
E\subset Q:$%
\begin{equation*}
u\left( E\right) \leq C\left( \frac{\left\vert E\right\vert }{\left\vert
Q\right\vert }\right) ^{\delta }u\left( Q\right)
\end{equation*}
\end{example}

Let's give the definition of $weak-A_{\infty }$ weights: $u\in
weak-A_{\infty }$ if and only if there exists positive constants $C$ and $%
\delta $ such that for any cube $Q$ and any measurable $E\subset Q:$%
\begin{equation}
u\left( E\right) \leq C\left( \frac{\left\vert E\right\vert }{\left\vert
Q\right\vert }\right) ^{\delta }u\left( 2Q\right)  \label{WAINFC}
\end{equation}

\begin{remark}
It's easy to prove that we can replace the factor $2$ with any constant $k>1$
obtaining an equivalent definition of $weak-A_{\infty }$.
\end{remark}

\begin{remark}
It%
\'{}%
s clear that if $u\in A_{\infty }$ then $u\in weak-A_{\infty }$ because any $%
u\in A_{\infty }$ is a doubling weight (see [D]), that is $u\left( 2Q\right)
\leq C.u\left( Q\right) $ for some $C>0$ and for every cube $Q$.
\end{remark}

It's a known result that an equivalent condition for $u$ to be in $A_{\infty
}$ is to belong to a $RHI$ class, that means that for some $r>1$ and $C>0$
it holds for any cube $Q$%
\begin{equation*}
\left( \frac{1}{\left\vert Q\right\vert }\int_{Q}u^{r}\right) ^{\frac{1}{r}%
}\leq C.\frac{1}{\left\vert Q\right\vert }\int_{Q}u
\end{equation*}

\begin{remark}
Let%
\'{}%
s remark that those weights that belongs to $weak-A_{\infty }$ but that don%
\'{}%
t belong to $A_{\infty }$ are always non-doubling weights.
\end{remark}

\begin{remark}
A corollary that we can obtain immediately taking suprema on the $RHI$
condition for $A_{\infty }$ weights is that for any $x\in \mathbb{R}^{n}$ 
\begin{equation*}
\left( M\left( u^{r}\right) \left( x\right) \right) ^{\frac{1}{r}}\leq
C.Mu\left( x\right)
\end{equation*}
\end{remark}

It can be obtained for $weak-A_{\infty }$ weights a condition analogous to $%
RHI$ as we can see in the next:

\begin{lemma}
If $u\in weak-A_{\infty }$ there are some $r>1$ and $C>0$ such that for any
cube $Q$%
\begin{equation*}
\left( \frac{1}{\left\vert Q\right\vert }\int_{Q}u^{r}\right) ^{\frac{1}{r}%
}\leq C.\frac{1}{\left\vert 2Q\right\vert }\int_{2Q}u
\end{equation*}
\end{lemma}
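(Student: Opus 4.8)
The plan is to deduce the weak reverse Hölder inequality directly from the weak-$A_{\infty }$ condition (\ref{WAINFC}) by a single, \emph{global} application of that condition on the cube $Q$, combined with the layer-cake (distribution function) formula, rather than by iterating a Calderón–Zygmund decomposition. Fix a cube $Q$ and a number $r>1$ to be chosen below, and set $E_{\lambda }=\{x\in Q:u(x)>\lambda \}$ for $\lambda >0$. Writing $u(x)^{r-1}=(r-1)\int_{0}^{u(x)}\lambda ^{r-2}\,d\lambda $ and applying Fubini's theorem gives the identity $\int_{Q}u^{r}=(r-1)\int_{0}^{\infty }\lambda ^{r-2}\,u(E_{\lambda })\,d\lambda $, where as usual $u(E_{\lambda })=\int_{E_{\lambda }}u$. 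The whole problem is thereby reduced to obtaining sufficiently fast decay of the quantity $u(E_{\lambda })$ as $\lambda \rightarrow \infty $.

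Here the weak-$A_{\infty }$ hypothesis enters in the simplest possible way. Since $E_{\lambda }\subset Q$ is measurable, the condition (\ref{WAINFC}) applied with this $E$ and this very cube $Q$ yields $u(E_{\lambda })\leq C\,(|E_{\lambda }|/|Q|)^{\delta }\,u(2Q)$. To convert this into decay in $\lambda $, I would bound the Lebesgue measure of the level set by Chebyshev's inequality, $|E_{\lambda }|\leq \lambda ^{-1}\int_{Q}u\leq \lambda ^{-1}u(2Q)$ (using $u\geq 0$ and $Q\subset 2Q$). Substituting, and writing $A:=u(2Q)/|Q|$, one obtains the power bound $u(E_{\lambda })\leq C\,|Q|\,A^{1+\delta }\lambda ^{-\delta }$ for every $\lambda >0$, together with the trivial bound $u(E_{\lambda })\leq u(2Q)=A\,|Q|$.

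With these two estimates in hand I would split the $\lambda $-integral at the threshold $\lambda _{0}:=A=u(2Q)/|Q|$, using the trivial bound on $(0,\lambda _{0})$ and the power decay on $(\lambda _{0},\infty )$. The tail integral $\int_{\lambda _{0}}^{\infty }\lambda ^{r-2-\delta }\,d\lambda $ converges exactly when $r-2-\delta <-1$, so the one genuine constraint is to take $r\in (1,1+\delta )$; since $\delta >0$ this is always possible, and the constants produced depend only on $C$, $\delta $, $n$ and $r$, hence are uniform in $Q$. A short computation then gives $\int_{Q}u^{r}\leq C'\,A^{r}\,|Q|$, that is $\bigl(\frac{1}{|Q|}\int_{Q}u^{r}\bigr)^{1/r}\leq (C')^{1/r}A=(C')^{1/r}2^{n}\,\frac{1}{|2Q|}\int_{2Q}u$, which is precisely the asserted inequality.

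I expect the only real conceptual hazard to be the temptation to imitate the classical derivation of the reverse Hölder inequality from $A_{\infty }$, namely by a stopping-time / Calderón–Zygmund family $\{Q_{j}\}$ of subcubes of $Q$: there the weak condition (\ref{WAINFC}) would force the enlarged cubes $2Q_{j}$ on the right-hand side, and since the concentric doubles of pairwise disjoint cubes need not have bounded overlap, the sum $\sum _{j}u(2Q_{j})$ cannot in general be controlled by $u(2Q)$. Applying (\ref{WAINFC}) a single time on $Q$ itself, as above, avoids this difficulty entirely; moreover the identical argument with $u(Q)$ in place of $u(2Q)$ simultaneously recovers the usual reverse Hölder inequality for genuine $A_{\infty }$ weights.
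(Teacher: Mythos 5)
Your argument is correct and follows essentially the same route as the paper's proof: both apply the weak-$A_{\infty}$ condition directly to the level sets $E_{\lambda}\subset Q$, combine it with a Chebyshev bound on $|E_{\lambda}|$, insert the resulting decay into the layer-cake formula, and split the $\lambda$-integral at the average of $u$ over the doubled cube. The only differences are cosmetic (you use the weighted form $\int_{Q}u^{r}=(r-1)\int_{0}^{\infty}\lambda^{r-2}u(E_{\lambda})\,d\lambda$ and land on the range $r\in(1,1+\delta)$ rather than the paper's $r\in(1,\tfrac{1}{1-\delta})$), and both choices yield the stated inequality with constants uniform in $Q$.
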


\begin{proof}
Let $Q$ any cube and $E_{t}=\{x\in Q:u\left( x\right) >t\}$. Now, appliying
the definition of $E_{t}\ $and (\ref{WAINFC}) we have $t.\left\vert
E_{t}\right\vert \leq u\left( E_{t}\right) \leq C.\frac{\left\vert
E_{t}\right\vert ^{\delta }}{\left\vert Q\right\vert ^{\delta }}.u\left(
2Q\right) $. Hence, using $\left\vert 2Q\right\vert =2^{n}\left\vert
Q\right\vert $ and incorporating the factor $2^{n}$ to the constant $C$:%
\begin{equation*}
t.\left\vert E_{t}\right\vert ^{1-\delta }\leq C.\left\vert Q\right\vert
^{1-\delta }.\frac{u\left( 2Q\right) }{\left\vert 2Q\right\vert }
\end{equation*}%
so 
\begin{equation*}
\left\vert E_{t}\right\vert \leq C.t^{\frac{-1}{1-\delta }}.\left\vert
Q\right\vert .\left( \frac{u\left( 2Q\right) }{\left\vert 2Q\right\vert }%
\right) ^{^{\frac{1}{1-\delta }}}
\end{equation*}

Now we use this inequality in the layer-cake formula. Let%
\'{}%
s be $k\in \left( 0,\infty \right) $ that we will chose later: 
\begin{equation*}
\int_{Q}u^{r}=\int_{0}^{\infty }rt^{r-1}\left\vert E_{t}\right\vert
dt=\int_{0}^{\infty }rt^{r-1}\left\vert E_{t}\right\vert
dt=\int_{0}^{k}rt^{r-1}\left\vert E_{t}\right\vert dt+\int_{k}^{\infty
}rt^{r-1}\left\vert E_{t}\right\vert dt
\end{equation*}%
then 
\begin{equation*}
\int_{Q}u^{r}\leq \int_{0}^{k}rt^{r-1}\left\vert Q\right\vert
dt+C\int_{k}^{\infty }rt^{r-1}t^{\frac{-1}{1-\delta }}.\left\vert
Q\right\vert .\left( \frac{u\left( 2Q\right) }{\left\vert 2Q\right\vert }%
\right) ^{^{\frac{1}{1-\delta }}}dt
\end{equation*}%
that is:%
\begin{equation*}
\int_{Q}u^{r}\leq \left\vert Q\right\vert .\left. t^{r}\right\vert
_{0}^{k}+C.\left\vert Q\right\vert .\left( \frac{u\left( 2Q\right) }{%
\left\vert 2Q\right\vert }\right) ^{^{\frac{1}{1-\delta }}}.\frac{r}{r-\frac{%
1}{1-\delta }}.\left. t^{r-\frac{1}{1-\delta }}\right\vert _{k}^{\infty }
\end{equation*}%
then, for $r:1<r<\frac{1}{1-\delta }$ we get: 
\begin{equation*}
\frac{1}{\left\vert Q\right\vert }\int_{Q}u^{r}\leq k^{r}+C.\frac{r}{\frac{1%
}{1-\delta }-r}.\left( \frac{u\left( 2Q\right) }{\left\vert 2Q\right\vert }%
\right) ^{^{\frac{1}{1-\delta }}}.k^{r-\frac{1}{1-\delta }}
\end{equation*}%
Now choosing $k=\frac{u\left( 2Q\right) }{\left\vert 2Q\right\vert }$ it
results: 
\begin{equation*}
\frac{1}{\left\vert Q\right\vert }\int_{Q}u^{r}\leq \left( \frac{u\left(
2Q\right) }{\left\vert 2Q\right\vert }\right) ^{r}+C.\frac{r}{\frac{1}{%
1-\delta }-r}.\left( \frac{u\left( 2Q\right) }{\left\vert 2Q\right\vert }%
\right) ^{^{\frac{1}{1-\delta }}}.\left( \frac{u\left( 2Q\right) }{%
\left\vert 2Q\right\vert }\right) ^{r-\frac{1}{1-\delta }}
\end{equation*}%
,hence

\begin{equation*}
\frac{1}{\left\vert Q\right\vert }\int_{Q}u^{r}\leq \left( C.\frac{r}{\frac{1%
}{1-\delta }-r}\right) \left( \frac{u\left( 2Q\right) }{\left\vert
2Q\right\vert }\right) ^{r}
\end{equation*}%
and renaming the constant we have:%
\begin{equation*}
\left( \frac{1}{\left\vert Q\right\vert }\int_{Q}u^{r}\right) ^{\frac{1}{r}%
}\leq C.\frac{u\left( 2Q\right) }{\left\vert 2Q\right\vert }
\end{equation*}
\end{proof}

\begin{corollary}
From the previous lemma it%
\'{}%
s obvious that the pointwise inequality 
\begin{equation}
\left( M\left( u^{r}\right) \left( x\right) \right) ^{\frac{1}{r}}\leq
C.Mu\left( x\right)  \label{PRHI}
\end{equation}%
still remains true for $weak-A_{\infty }$ weights and using Neugebauer's
Lemma the weights $u\in weak-A_{\infty }$ satisfy that $Mu\in A_{1}.$
\end{corollary}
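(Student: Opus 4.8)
The plan is to read \eqref{PRHI} directly off the preceding lemma by taking suprema over cubes, and then to close the argument by quoting Neugebauer's Lemma. Everything is short, so I describe the three steps and isolate the one place where a small care is required.

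First I would fix a point $x\in\mathbb{R}^{n}$ and let $Q$ be an arbitrary cube containing $x$. The previous lemma supplies constants $r>1$ and $C>0$, independent of $Q$, such that
\[
\left(\frac{1}{\left\vert Q\right\vert}\int_{Q}u^{r}\right)^{\frac{1}{r}}\leq C\cdot\frac{1}{\left\vert 2Q\right\vert}\int_{2Q}u.
\]
The one observation that makes the proof work is that the dilate $2Q$ is again a cube containing $x$ (since $x\in Q\subset 2Q$), so its $u$-average is controlled by the maximal function at $x$, namely $\frac{1}{\left\vert 2Q\right\vert}\int_{2Q}u\leq Mu\left(x\right)$. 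Hence every cube $Q\ni x$ satisfies $\left(\frac{1}{\left\vert Q\right\vert}\int_{Q}u^{r}\right)^{\frac{1}{r}}\leq C\cdot Mu\left(x\right)$.

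Next I would take the supremum of the left-hand side over all cubes $Q\ni x$. Because the bound $C\cdot Mu\left(x\right)$ on the right does not depend on $Q$, the supremum on the left is precisely $\left(M\left(u^{r}\right)\left(x\right)\right)^{\frac{1}{r}}$, and I obtain
\[
\left(M\left(u^{r}\right)\left(x\right)\right)^{\frac{1}{r}}\leq C\cdot Mu\left(x\right),
\]
which is exactly \eqref{PRHI}. Finally, this inequality is the hypothesis of Neugebauer's Lemma with $s=r$ and $C_{0}=C$; that lemma then gives $Mu\in A_{1}$ immediately.

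The main (and essentially only) subtlety is the passage from the lemma's two-scale estimate to a genuine pointwise inequality: since the right-hand side averages $u$ over $2Q$ rather than over $Q$, I must use that $2Q$ still contains $x$ to bound that average by $Mu\left(x\right)$ before taking suprema. No reverse Hölder inequality for $A_{\infty}$ is invoked here; the only inputs are the $weak\text{-}A_{\infty}$ lemma and Neugebauer's Lemma, which is what makes the corollary an easy consequence of the preceding work.
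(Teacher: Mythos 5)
Your proof is correct and is precisely the argument the paper intends (the paper merely declares the corollary "obvious" from the lemma): you apply the weak-$A_{\infty}$ lemma to an arbitrary cube $Q\ni x$, observe that $2Q$ is again a cube containing $x$ so its $u$-average is at most $Mu\left(x\right)$, take suprema over $Q\ni x$ to get \eqref{PRHI}, and invoke Neugebauer's Lemma with $s=r$. The one "subtlety" you flag — bounding the $2Q$-average by $Mu\left(x\right)$ — is exactly the right detail to make explicit, and it works because $M$ is the non-centered maximal operator over cubes containing $x$.
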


\begin{remark}
Actually the condition: $\left( \frac{1}{\left\vert Q\right\vert }%
\int_{Q}u^{r}\right) ^{\frac{1}{r}}\leq C.\frac{1}{\left\vert 2Q\right\vert }%
\int_{2Q}u$ characterizes the $weak-A_{\infty }$ weights; it can be proved
that the converse of the previous lemma is also true, nevertheless we will
not need here that result. As we mentioned in a previous remark we can
replace the constant $2$ for any $k>1$, so $u\in weak-A_{\infty }$ iff there
exists some positive constant $C$ such that for any $k>1$ and every cube $Q$%
\begin{equation}
\left( \frac{1}{\left\vert Q\right\vert }\int_{Q}u^{r}\right) ^{\frac{1}{r}%
}\leq C.\frac{1}{\left\vert kQ\right\vert }\int_{kQ}u  \label{WAINFLRHI}
\end{equation}
\end{remark}

\begin{remark}
We have already seen that $A_{\infty }\subset weak-A_{\infty }\subset
M^{-1}\left( A_{\infty }\right) $ where we denote $M^{-1}\left( A_{\infty
}\right) $ the class of weights $u$ such that $Mu\in A_{\infty }$.

It%
\'{}%
s interesting to observe that this question has a close relationship with
another one involving the weighted Fefferman-Stein inequality in $%
L^{p}\left( w\right) :$ 
\begin{equation}
\left\Vert f\right\Vert _{L^{p}\left( w\right) }\leq c\left\Vert
f^{\#}\right\Vert _{L^{p}\left( w\right) }\qquad \left( 1<p<\infty \right)
\label{FFS}
\end{equation}

for some $c>0$, and for every $f\in L^{p}\ $such that $f\in S_{0}\left( 
\mathbb{R}^{n}\right) $, where $S_{0}\left( \mathbb{R}^{n}\right) \ $is the
space of measurable functions $f$ on $\mathbb{R}^{n}$ such that for any $t>0$
\begin{equation*}
\mu _{f}\left( t\right) =\left\vert \left\{ x\in \mathbb{R}^{n}:\left\vert
f\left( x\right) \right\vert >t\right\} \right\vert <\infty
\end{equation*}

The inequality \ref{FFS} is equivalent to many interesting others, for
instance, with the same hypothesis of \ref{FFS}:%
\begin{equation*}
\left\Vert Mf\right\Vert _{L^{p}\left( w\right) }\leq c\left\Vert
f^{\#}\right\Vert _{L^{p}\left( w\right) }\qquad \left( 1<p<\infty \right)
\end{equation*}%
or for some $c>0,$ $r>1$ and for any $f\in L_{loc}^{1}\left( \mathbb{R}%
^{n}\right) $%
\begin{equation}
\int_{\mathbb{R}^{n}}\mathcal{M}_{p,r}\left( f,w\right) \left\vert
f\right\vert dx\leq c\int_{\mathbb{R}^{n}}\left( Mf\right) ^{p}wdx\qquad
\left( 1<p<\infty \right)  \label{RFS}
\end{equation}%
where $\mathcal{M}_{p,r}\left( f,w\right) =\sup\limits_{Q\ni x}\left( \frac{1%
}{\left\vert Q\right\vert }\int_{Q}\left\vert f\right\vert \right)
^{p-1}\left( \frac{1}{\left\vert Q\right\vert }\int_{Q}w^{r}\right) ^{\frac{1%
}{r}}$. The equivalence of those inequalities is proven in [L3].

Related to the -at our knowledge- open question about for which weights the
former inequalities hold are the following inclusions of nested classes: $%
A_{\infty }\subset weak-A_{\infty }\subset C_{p+\varepsilon }\subset C_{p}$
where $\varepsilon >0$ and $C_{p}$ condition means that there exists $%
c,\delta >0$ such that for any cube $Q$ and any measurable $E\subset Q$ 
\begin{equation*}
u\left( E\right) \leq c\left( \frac{\left\vert E\right\vert }{\left\vert
Q\right\vert }\right) ^{\delta }\int_{\mathbb{R}^{n}}\left( M\chi
_{Q}\right) ^{p}u
\end{equation*}%
Remember that for $u\in A_{1}$for any cube $Q$ and any measurable $E\subset
Q $%
\begin{equation*}
u\left( E\right) \leq c\left( \frac{\left\vert E\right\vert }{\left\vert
Q\right\vert }\right) ^{\delta }u\left( Q\right) =c\left( \frac{\left\vert
E\right\vert }{\left\vert Q\right\vert }\right) ^{\delta }\int_{\mathbb{R}%
^{n}}\left( \chi _{Q}\right) ^{p}u
\end{equation*}

and for $weak-A_{\infty }$ weights: $u\in weak-A_{\infty }$ if and only if
there exists positive constants $C$ and $\delta $ such that for any cube $Q$
and any measurable $E\subset Q:$%
\begin{equation*}
u\left( E\right) \leq C\left( \frac{\left\vert E\right\vert }{\left\vert
Q\right\vert }\right) ^{\delta }\int_{\mathbb{R}^{n}}\left( \chi
_{2Q}\right) ^{p}u
\end{equation*}%
and the mentioned inclusion are obvious. It can be found in [L3] (see also
[Y]) that $C_{p}$ is necessary and $C_{p+\varepsilon }$ is sufficient for %
\ref{RFS} or \ref{FFS} -and in [L3] is introduced a new sufficient condition 
$\widetilde{C_{p}}$ instead of $C_{p+\varepsilon }$ but it is not known if $%
\widetilde{C_{p}}$ or $C_{p+\varepsilon }$ are necessary conditions.

The inclusion relations from $A_{\infty }\subset weak-A_{\infty }\subset
M^{-1}\left( A_{\infty }\right) $ and $A_{\infty }\subset weak-A_{\infty
}\subset C_{p+\varepsilon }\subset C_{p}$ and the former inequalities sems
to be close linked: For instance $u\in C_{p}$ is necesssary for \ref{RFS},
and \ref{RFS} implies that for any $Q$ we have that $\left( \frac{1}{%
\left\vert Q\right\vert }\int_{Q}u^{r}\right) ^{\frac{1}{r}}\leq c\frac{1}{%
\left\vert Q\right\vert }\int_{\mathbb{R}^{n}}\left( M\chi _{Q}\right) ^{p}u$%
, which is a bit weaker than $\left( \frac{1}{\left\vert Q\right\vert }%
\int_{Q}u^{r}\right) ^{\frac{1}{r}}\leq C.\frac{1}{\left\vert Q\right\vert }%
\int_{\mathbb{R}^{n}}\left( \chi _{2Q}\right) ^{p}u$ that it is equivalent
to $weak-A_{\infty }$.

Additionally in [L3] is proven that $C_{p}$ is necessary for $\int_{\mathbb{R%
}^{n}}\mathcal{M}_{p,r}\left( f,w\right) \left\vert f\right\vert dx\leq
c\int_{\mathbb{R}^{n}}\left( Mf\right) ^{p}wdx$, that is \ref{RFS}$\ $%
implies $C_{p}$.

On the other hand, using the lemma of Neugebauer telling us $\left(
Mu^{r}\right) ^{\frac{1}{r}}\left( x\right) \leq C.Mu\left( x\right) $ for $%
u\in M^{-1}\left( A_{\infty }\right) $ for some $C>0,r>1$ and the definition
of $\mathcal{M}_{p,r}\left( f,u\right) $ we obtain that if $u\in
M^{-1}\left( A_{\infty }\right) $ then 
\begin{equation*}
\mathcal{M}_{p,r}\left( f,w\right) \left( u\right) =\sup\limits_{Q\ni
x}\left( \frac{1}{\left\vert Q\right\vert }\int_{Q}\left\vert f\right\vert
\right) ^{p-1}\left( \frac{1}{\left\vert Q\right\vert }\int_{Q}u^{r}\right)
^{\frac{1}{r}}
\end{equation*}%
\begin{equation*}
\leq \sup\limits_{Q\ni x}\left( \frac{1}{\left\vert Q\right\vert }%
\int_{Q}\left\vert f\right\vert \right) ^{p-1}M_{r}u\left( x\right) \leq
\end{equation*}%
\begin{equation*}
\sup\limits_{Q\ni x}\left( \frac{1}{\left\vert Q\right\vert }%
\int_{Q}\left\vert f\right\vert \right) ^{p-1}.C.Mu\left( x\right) \leq
\left( Mf\right) ^{p-1}\left( x\right) .C.Mu\left( x\right)
\end{equation*}%
and then integrating we have: 
\begin{equation}
\int_{\mathbb{R}^{n}}\mathcal{M}_{p,r}\left( f,w\right) \left\vert
f\right\vert dx\leq c\int_{\mathbb{R}^{n}}\left( Mf\right) ^{p}Mwdx
\label{TRFS}
\end{equation}%
(compare with \ref{RFS}). So we have that $M^{-1}\left( A_{\infty }\right) $
implies \ref{TRFS} and \ref{RFS} implies $C_{p}$.
\end{remark}

\bigskip

A\ PAIR\ OF\ APPLICATIONS

\begin{application}
Using the criterion: $Mu\in A_{\infty }$ if and only if for any $\lambda \in
\left( 0,1\right) $ it holds that $m_{\lambda }\left( Mu\right) \thickapprox
M\left( Mu\right) $ we can derive from this result a characterization of the 
$A_{1}$ weights similar to the construction of Coifman and Rochberg.
\end{application}

First of all we introduce the definition of the local sharp maximal
operator; for $0<\lambda <1$ we define:

$M_{\lambda }^{\#}f\left( x\right) =\sup\limits_{Q\ni
x}\inf\limits_{c}\left( \left( f-c\right) \chi _{Q}\right) ^{\ast }\left(
\lambda \left\vert Q\right\vert \right) $

The sharp maximal function have a role quite similar to the Hardy-Littlewood
maximal operator for the local sharp maximal functions because there are
positive constants $c_{1}$ and $c_{2}$ such that for $f\in L_{loc}^{1}$:

\begin{equation*}
c_{1}MM_{\lambda }^{\#}f\left( x\right) \leq f^{\#}\left( x\right) \leq
c_{2}MM_{\lambda }^{\#}f\left( x\right)
\end{equation*}%
(see [J-T]). Using the former inequalities we easily get that for the sharp
function an statement similar to the first one of the Coifman-Rochberg
theorem:

\begin{lemma}
Let $f\in L_{loc}^{1}\left( \mathbb{R}^{n}\right) $ and $0\leq \delta <1$,
then $w\left( x\right) =f^{\#}\left( x\right) ^{\delta }\ $is in $A_{1}$.
\end{lemma}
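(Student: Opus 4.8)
The plan is to reduce the statement to part (1) of the Coifman--Rochberg theorem by means of the pointwise equivalence $f^{\#}(x)\thickapprox M(M_{\lambda }^{\#}f)(x)$ quoted immediately above (from [J-T]). Writing $g=M_{\lambda }^{\#}f$, that equivalence reads $c_{1}Mg(x)\leq f^{\#}(x)\leq c_{2}Mg(x)$ for positive constants $c_{1},c_{2}$ and every $x$. The strategy is: transfer the $A_{1}$ property from $(Mg)^{\delta }$ (where Coifman--Rochberg applies) to $f^{\#}{}^{\delta }$ through this two-sided comparison.

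First I would raise the comparison to the power $\delta $. Since $0\leq \delta <1$ and all the quantities involved are non-negative, monotonicity gives $c_{1}^{\delta }\left( Mg(x)\right) ^{\delta }\leq f^{\#}(x)^{\delta }\leq c_{2}^{\delta }\left( Mg(x)\right) ^{\delta }$, so that $f^{\#}(x)^{\delta }=k(x)\left( Mg(x)\right) ^{\delta }$ where $k$ satisfies $c_{1}^{\delta }\leq k(x)\leq c_{2}^{\delta }$; in particular $k,k^{-1}\in L^{\infty }$. Next I would apply part (1) of the Coifman--Rochberg theorem to $g$, obtaining $\left( Mg\right) ^{\delta }\in A_{1}$ with an $A_{1}$ constant depending only on $\delta $. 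It then only remains to note that $A_{1}$ is stable under multiplication by a weight bounded above and below: if $v\in A_{1}$ and $k,k^{-1}\in L^{\infty }$, then
\begin{equation*}
M(kv)(x)\leq \|k\|_{\infty }Mv(x)\leq \|k\|_{\infty }[v]_{A_{1}}v(x)\leq \|k\|_{\infty }\|k^{-1}\|_{\infty }[v]_{A_{1}}\,k(x)v(x),
\end{equation*}
so $kv\in A_{1}$. Taking $v=\left( Mg\right) ^{\delta }$ yields $f^{\#}{}^{\delta }=k\,v\in A_{1}$, which is the claim; moreover the resulting constant depends only on $\delta $ and on $c_{1},c_{2}$.

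The step that needs care, and which I expect to be the main obstacle, is the legitimacy of invoking Coifman--Rochberg for $g=M_{\lambda }^{\#}f$, since that theorem requires $g\in L_{loc}^{1}\left( \mathbb{R}^{n}\right) $ with $Mg<\infty $ a.e. The a.e.\ finiteness I would again read off the equivalence together with property (i): $M_{\lambda }^{\#}f(x)\leq M(M_{\lambda }^{\#}f)(x)\leq c_{1}^{-1}f^{\#}(x)\leq 2c_{1}^{-1}Mf(x)$, so wherever $f^{\#}$ is finite---which is the only regime in which the assertion is not vacuous---both $g$ and $Mg$ are finite. The genuinely delicate point is the local integrability of $g=M_{\lambda }^{\#}f$; I would verify it by the same chain of bounds, controlling $\int_{Q_{0}}M_{\lambda }^{\#}f$ on a fixed cube $Q_{0}$ by the local oscillation of $f$ (equivalently, by $\int_{Q_{0}}f^{\#}$), so that the hypotheses of Coifman--Rochberg become available and the three-step argument above goes through.
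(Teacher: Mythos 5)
Your proof is correct and follows essentially the same route as the paper's: both raise the Jawerth--Torchinsky two-sided comparison $c_{1}MM_{\lambda }^{\#}f\leq f^{\#}\leq c_{2}MM_{\lambda }^{\#}f$ to the power $\delta $, invoke Coifman--Rochberg part (1) for $\left( MM_{\lambda }^{\#}f\right) ^{\delta }$, and transfer the $A_{1}$ bound across the bounded two-sided factor. Your additional verification that $M_{\lambda }^{\#}f$ satisfies the hypotheses of Coifman--Rochberg is a point the paper passes over silently, and is a welcome extra.
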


\begin{proof}
For $c_{1}^{\delta }\left( MM_{\lambda }^{\#}f\left( x\right) \right)
^{\delta }\leq f^{\#}\left( x\right) ^{\delta }\leq c_{2}^{\delta }\left(
MM_{\lambda }^{\#}f\left( x\right) \right) ^{\delta }$ and $\left(
MM_{\lambda }^{\#}f\left( x\right) \right) ^{\delta }\in A_{1}$ because the
mentioned result of Coifman and Rochberg. Now $Mf^{\#}\left( x\right)
^{\delta }\leq M\left( c_{2}^{\delta }\left( MM_{\lambda }^{\#}f\left(
x\right) \right) ^{\delta }\right) \leq c_{2}^{\delta }[MM_{\lambda
}^{\#}f\left( x\right) ]_{A_{1}}\left( MM_{\lambda }^{\#}f\left( x\right)
\right) ^{\delta }\leq Cf^{\#}\left( x\right) ^{\delta }$ with constant $C=%
\frac{c_{2}^{\delta }}{c_{1}^{\delta }}[MM_{\lambda }^{\#}f\left( x\right)
]_{A_{1}}$, so $f^{\#}\left( x\right) ^{\delta }\in A_{1}$.
\end{proof}

We don%
\'{}%
t know if any $w\in A_{1}$ always could be written as $k\left( x\right)
f^{\#}\left( x\right) ^{\delta }$ for suitable $f\in L_{loc}^{1}$; $0<\delta
<1$ and $k,k^{-1}\in L^{\infty }$, but we can obtain a result similar to the
second part of Coifman-Rochberg theorem if we added a multiple of the local
maximal function $m_{\lambda }$:

\begin{proposition}
If $w\in A_{1}$ then there are $k\left( x\right) $ such that $k,k^{-1}\in
L^{\infty }$ and a constants $c,d>0$ such that $w\left( x\right) =k\left(
x\right) \left( c.\left( \left( w^{\alpha }\left( x\right) \right)
^{\#}\right) ^{\delta }+d.\left( m_{\lambda }w^{\alpha }\left( x\right)
\right) ^{\delta }\right) $
\end{proposition}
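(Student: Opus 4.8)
The plan is to reduce the statement to a single mechanism: for an $A_1$ weight the local maximal function $m_\lambda$ reproduces the weight up to multiplicative constants, so that the term $d\,(m_\lambda w^\alpha)^\delta$ already controls $w$ from both sides, while the sharp term $c\,((w^\alpha)^\#)^\delta$ is pinned only from above and is therefore harmless. First I would use property E) to choose $\alpha>1$ with $u:=w^\alpha\in A_1$, and set $\delta:=1/\alpha\in(0,1)$, so that $w=u^{\delta}$. Write $C:=[u]_{A_1}$, i.e. $Mu\le C\,u$ a.e. The whole argument then consists of sandwiching $w=u^\delta$ between suitable positive multiples of $c\,(u^\#)^\delta+d\,(m_\lambda u)^\delta$.

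Next I would establish $u\le m_\lambda u\le \tfrac1\lambda Mu$ a.e. The lower bound is precisely inequality (vi). For the upper bound, a short Chebyshev estimate shows $(u\chi_Q)^*(\lambda|Q|)\le \frac{1}{\lambda|Q|}\int_Q u$ for every cube $Q\ni x$: if $t<(u\chi_Q)^*(\lambda|Q|)$ then $|\{y\in Q:u(y)>t\}|>\lambda|Q|$, hence $t\,\lambda|Q|<\int_Q u$; letting $t$ increase to the rearrangement value and taking the supremum over cubes $Q\ni x$ gives $m_\lambda u\le\tfrac1\lambda Mu$. Combining with $Mu\le C u$ yields $u\le m_\lambda u\le\tfrac{C}{\lambda}u$ a.e., and raising to the power $\delta$,
\[
w=u^\delta\le (m_\lambda u)^\delta\le (C/\lambda)^\delta\,w\qquad\text{a.e.}
\]

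For the sharp term I would invoke inequality (i), $u^\#\le 2Mu\le 2C\,u$, so that $0\le (u^\#)^\delta\le (2C)^\delta u^\delta=(2C)^\delta w$ a.e. Fixing any $c,d>0$ (for instance $c=d=1$) and adding the two estimates,
\[
d\,w\le c\,(u^\#)^\delta+d\,(m_\lambda u)^\delta\le \big(c(2C)^\delta+d(C/\lambda)^\delta\big)\,w\qquad\text{a.e.}
\]
Since an $A_1$ weight is positive a.e., the middle expression is positive a.e., and I may define
\[
k(x):=\frac{w(x)}{c\,(u^\#(x))^\delta+d\,(m_\lambda u(x))^\delta}.
\]
The last chain of inequalities shows $\big(c(2C)^\delta+d(C/\lambda)^\delta\big)^{-1}\le k(x)\le d^{-1}$ a.e., so $k,k^{-1}\in L^\infty$, and by construction $w=k\big(c\,((w^\alpha)^\#)^\delta+d\,(m_\lambda w^\alpha)^\delta\big)$ with $u=w^\alpha$, which is the asserted representation.

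The only step that is not an immediate application of the properties already collected is the auxiliary upper bound $m_\lambda u\le\tfrac1\lambda Mu$, which is why I would spell out the Chebyshev argument above; everything else is a direct assembly of (i), (vi), property E) and the $A_1$ condition $Mu\le C u$. It is worth stressing that the $m_\lambda$-term alone yields the full two-sided comparison with $w$, so the sharp term plays no essential role beyond matching the stated form, and its presence is controlled purely through its upper bound.
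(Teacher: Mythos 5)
Your proof is correct, and it takes a genuinely more elementary route than the paper's. The paper obtains the lower bound $w^{\alpha}\leq M(w^{\alpha})\leq c_{\lambda ,n}(w^{\alpha })^{\#}+m_{\lambda }(w^{\alpha })$ from Lerner's pointwise inequality (the one labelled (xi), whose statement is in fact truncated in the text), and then gets the upper bound through a long chain that applies $M$ to the whole sum and invokes sublinearity, the Coifman--Rochberg theorem, and the criterion $m_{\lambda }(Mu)\thickapprox M(Mu)\thickapprox Mu\thickapprox u$. You bypass all of that: the lower bound comes from property (vi) alone, since $u\leq m_{\lambda }u$ a.e.\ already gives $w=u^{\delta }\leq (m_{\lambda }u)^{\delta }$, and the upper bound comes from property (i) together with your Chebyshev estimate $m_{\lambda }u\leq \lambda ^{-1}Mu$ (which is correct and worth recording, as the paper never states it) and the single use of $Mu\leq [u]_{A_{1}}u$. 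What your argument buys is brevity, explicit constants, and independence from Lerner's inequalities and Coifman--Rochberg; it also exposes that the sharp term is inessential to the representation -- the $m_{\lambda }$ term alone sandwiches $w$, so $c$ could even be taken to be $0$ -- whereas in the paper's route the sharp term appears to carry weight in the lower bound because Lerner's inequality is used where (vi) would suffice. What the paper's route buys is mainly expository: it ties the proposition to the machinery (the criterion and the pointwise inequalities (vii), (xi)) that motivates pairing $f^{\#}$ with $m_{\lambda }u$ in part (1) of the theorem, where the sharp function of an arbitrary $f\in L_{loc}^{1}$ genuinely matters. The only points to make explicit in your write-up are the standard rearrangement fact that $t<(u\chi _{Q})^{\ast }(\lambda \left\vert Q\right\vert )$ implies $\left\vert \{y\in Q:u(y)>t\}\right\vert >\lambda \left\vert Q\right\vert $ (right-continuity of the distribution function), and the observation that a nontrivial $A_{1}$ weight is positive a.e.\ so that $k$ is well defined; both are routine.
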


\begin{proof}
If $w\in A_{1}$ we can use the property E) to take $\alpha >1$ such that $%
w^{\alpha }\in A_{1}$. Thus $M\left( w^{\alpha }\right) \in A_{1}$. Now
using for $w^{\alpha }$ the above criterion that establishes that $Mu\in
A_{1}$ if and only if $m_{\lambda }\left( Mu\right) \thickapprox M\left(
Mu\right) $ and then in such situation: $m_{\lambda }\left( M\left(
w^{\alpha }\right) \right) \thickapprox M\left( M\left( w^{\alpha }\right)
\right) \thickapprox M\left( w^{\alpha }\right) \thickapprox w^{\alpha }$,
also we have that $Mw\thickapprox w$ because $w\in A_{1}$ and also using the
pointwise inequalities mentioned in xi) and vii): $m_{\lambda }\left(
Mu\right) \left( x\right) \leq c_{\lambda ,n}.u^{\#}\left( x\right)
+Mu\left( x\right) $ and $m_{\lambda }\left( Mu\right) \left( x\right) \leq
c_{\lambda ,n}.u^{\#}\left( x\right) +Mu\left( x\right) $, for $u=w^{\alpha
} $ we have:%
\begin{equation*}
w\left( x\right) ^{\alpha }\leq M\left( w^{\alpha }\right) \left( x\right)
\leq c_{\lambda ,n}\cdot \left( w^{\alpha }\right) ^{\#}\left( x\right)
+m_{\lambda }\left( w^{\alpha }\right) \left( x\right)
\end{equation*}%
Then with $\delta =\frac{1}{\alpha }$ it is $0<\delta <1\ $and $\alpha
\delta =1.$ Also we will use property (i): $u^{\#}\leq 2Mu$ pointwise,
properties (vi) ($\left\vert f\left( x\right) \right\vert \leq m_{\lambda
}f\left( x\right) $) and (x) ($f\left( x\right) \leq Mf\left( x\right) $)
and that if $f\left( x\right) \leq g\left( x\right) $ a.e. for positive
functions then $Mf\left( x\right) \leq Mg\left( x\right) $ and $m_{\lambda
}\left( f\right) \left( x\right) \leq m_{\lambda }\left( g\right) \left(
x\right) $ a.e.

Further we use the sublinearity of $M$ and the facts that $w^{\alpha }$ and $%
w$ are in $A_{1}$ and then because the criterion, we can use that for $w\in
A_{1}$ then $Mw\in A_{1}$ too and it occurs that $m_{\lambda }\left(
Mw\right) \thickapprox M\left( Mw\right) \thickapprox Mw\thickapprox w$. We
will number or rename the constants that appear. Also we will use that $%
M\left( \left( Mw^{\alpha }\right) ^{\delta }\right) \leq C\left( Mw^{\alpha
}\right) ^{\delta }$ (because $\left( Mf\right) ^{\delta }\in A_{1}$ by
Coifman-Rochberg). So we get: 
\begin{equation*}
w\left( x\right) \leq \left( c_{1}\cdot \left( w^{\alpha }\right)
^{\#}\left( x\right) +m_{\lambda }\left( w^{\alpha }\right) \left( x\right)
\right) ^{\delta }
\end{equation*}%
\begin{equation*}
\leq c_{2}\cdot \left( \left( w^{\alpha }\right) ^{\#}\left( x\right)
\right) ^{\delta }+\left( m_{\lambda }\left( w^{\alpha }\right) \left(
x\right) \right) ^{\delta }
\end{equation*}%
\begin{equation*}
\leq M\left( c_{2}\cdot \left( \left( w^{\alpha }\right) ^{\#}\left(
x\right) \right) ^{\delta }+\left( m_{\lambda }\left( w^{\alpha }\right)
\left( x\right) \right) ^{\delta }\right)
\end{equation*}%
\begin{equation*}
\leq c_{2}\cdot M\left( \left( \left( w^{\alpha }\right) ^{\#}\left(
x\right) \right) ^{\delta }\right) +M\left( \left( m_{\lambda }\left(
w^{\alpha }\right) \left( x\right) \right) ^{\delta }\right)
\end{equation*}%
\begin{equation*}
\leq c_{2}\cdot M\left( 2^{\delta }M\left( w^{\alpha }\right) \left(
x\right) ^{\delta }\right) +M\left( \left( m_{\lambda }\left( Mw^{\alpha
}\right) \left( x\right) \right) ^{\delta }\right)
\end{equation*}%
\begin{equation*}
\leq c_{3}M\left( M\left( w^{\alpha }\right) \left( x\right) ^{\delta
}\right) +M\left( \left( m_{\lambda }\left( Mw^{\alpha }\right) \left(
x\right) \right) ^{\delta }\right)
\end{equation*}%
\begin{equation*}
\leq c_{3}M\left( c_{4}\left( w^{\alpha }\right) \left( x\right) ^{\delta
}\right) +M\left( \left( c_{5}w\left( x\right) ^{\alpha }\right) ^{\delta
}\right)
\end{equation*}%
\begin{equation*}
\leq c_{6}Mw\left( x\right) +c_{7}Mw\left( x\right) =c_{8}Mw\left( x\right)
\leq Cw\left( x\right)
\end{equation*}%
Thus we obtain: 
\begin{equation*}
w\left( x\right) \leq c_{1}^{\delta }\cdot \left( \left( w^{\alpha }\right)
^{\#}\left( x\right) \right) ^{\delta }+\left( m_{\lambda }\left( w^{\alpha
}\right) \left( x\right) \right) ^{\delta }\leq Cw\left( x\right)
\end{equation*}%
and then $k\left( x\right) =\frac{w\left( x\right) }{c_{2}.\left( \left(
w^{\alpha }\left( x\right) \right) ^{\#}\right) ^{\delta }+\left( m_{\lambda
}w^{\alpha }\left( x\right) \right) ^{\delta }}$ satisfy that $k\in
L^{\infty }$ and $k^{-1}\in L^{\infty }$

So $w\left( x\right) =k\left( x\right) \left( c.\left( \left( w^{\alpha
}\left( x\right) \right) ^{\#}\right) ^{\delta }+d.\left( m_{\lambda
}w^{\alpha }\left( x\right) \right) ^{\delta }\right) $ with $k,k^{-1}\in
L^{\infty }$ and $\delta \in \left( 0,1\right) $ for $c=c_{2}$ and $d=1$.
\end{proof}

On the other hand we have:

\begin{lemma}
If $0<\delta <1$ and $u\in A_{1}$ then $\left( m_{\lambda }u\left( x\right)
\right) ^{\delta }\in A_{1}$
\end{lemma}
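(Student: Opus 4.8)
The plan is to show that $\left( m_{\lambda }u\right) ^{\delta }\in A_{1}$ when $u\in A_{1}$ and $0<\delta <1$. The key observation is that for a weight $u\in A_{1}$ we have $m_{\lambda }u\thickapprox u\thickapprox Mu$ pointwise almost everywhere: the inequality $u\left( x\right) \leq m_{\lambda }u\left( x\right)$ holds a.e.\ by property (vi), while $u\in A_{1}$ gives $Mu\left( x\right) \leq \lbrack u]_{A_{1}}u\left( x\right)$ a.e., and trivially $m_{\lambda }u\left( x\right) \leq Mu\left( x\right)$ since the non-increasing rearrangement at level $\lambda \left\vert Q\right\vert $ is bounded by the average over $Q$ (more directly, $\left( f\chi_{Q}\right) ^{\ast }\left( \lambda \left\vert Q\right\vert \right) \leq \frac{1}{\lambda \left\vert Q\right\vert }\int_{Q}\left\vert f\right\vert \leq \frac{1}{\lambda }Mf$, so $m_{\lambda }u\leq \frac{1}{\lambda }Mu$). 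Combining these yields constants $c_{1},c_{2}>0$ with $c_{1}u\left( x\right) \leq m_{\lambda }u\left( x\right) \leq c_{2}u\left( x\right)$ a.e.

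Raising to the power $\delta$ preserves the equivalence: $c_{1}^{\delta }u\left( x\right) ^{\delta }\leq \left( m_{\lambda }u\left( x\right) \right) ^{\delta }\leq c_{2}^{\delta }u\left( x\right) ^{\delta }$ a.e. The plan is then to transfer the $A_{1}$ property from $u^{\delta }$ to $\left( m_{\lambda }u\right) ^{\delta }$ through this two-sided bound. First I would note that $u^{\delta }\in A_{1}$: indeed, since $u\in A_{1}$ we have $Mu\thickapprox u$, and raising the equivalence $u\leq Mu\leq \lbrack u]_{A_{1}}u$ to the power $\delta <1$ together with the pointwise bound $M\left( u^{\delta }\right) \leq \left( Mu\right) ^{\delta }$ (valid for $0<\delta <1$ by Jensen/concavity applied to averages) shows $M\left( u^{\delta }\right) \left( x\right) \leq \left( Mu\left( x\right) \right) ^{\delta }\leq \lbrack u]_{A_{1}}^{\delta }u\left( x\right) ^{\delta }$, so $u^{\delta }\in A_{1}$ with constant at most $[u]_{A_{1}}^{\delta }$.

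With $u^{\delta }\in A_{1}$ in hand, the final step is routine: for any $x$,
\begin{equation*}
M\left( \left( m_{\lambda }u\right) ^{\delta }\right) \left( x\right) \leq M\left( c_{2}^{\delta }u^{\delta }\right) \left( x\right) =c_{2}^{\delta }M\left( u^{\delta }\right) \left( x\right) \leq c_{2}^{\delta }[u^{\delta }]_{A_{1}}u\left( x\right) ^{\delta }\leq \frac{c_{2}^{\delta }}{c_{1}^{\delta }}[u^{\delta }]_{A_{1}}\left( m_{\lambda }u\left( x\right) \right) ^{\delta }
\end{equation*}
a.e., where the first inequality uses monotonicity of $M$ applied to $\left( m_{\lambda }u\right) ^{\delta }\leq c_{2}^{\delta }u^{\delta }$ and the last uses $c_{1}^{\delta }u^{\delta }\leq \left( m_{\lambda }u\right) ^{\delta }$. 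This exhibits $M\left( \left( m_{\lambda }u\right) ^{\delta }\right) \leq C\left( m_{\lambda }u\right) ^{\delta }$ with $C=\left( c_{2}/c_{1}\right) ^{\delta }[u^{\delta }]_{A_{1}}$, so $\left( m_{\lambda }u\right) ^{\delta }\in A_{1}$.

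The only genuinely delicate point is establishing the upper bound $m_{\lambda }u\leq \frac{1}{\lambda }Mu$ cleanly from the definition of the rearrangement; everything else is a mechanical transfer of $A_{1}$ across a pointwise two-sided equivalence. I expect no real obstacle, since the result is essentially a corollary of the pointwise equivalence $m_{\lambda }u\thickapprox u$ for $A_{1}$ weights combined with the stability of $A_{1}$ under multiplication by bounded-above-and-below factors.
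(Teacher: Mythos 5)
Your proof is correct, and it takes a genuinely more elementary route than the paper's. The paper bounds $M\left( \left( m_{\lambda }u\right) ^{\delta }\right) $ by a chain passing through $Mu$: it uses $u\leq Mu$, the estimate $m_{\lambda }\left( Mu\right) \leq c\,MMu$ (which rests on Lerner's pointwise inequality (vii)), the Coifman--Rochberg theorem to control $M\left( \left( MMu\right) ^{\delta }\right) $, and then the reverse bound $MMu\leq C\,m_{\lambda }\left( Mu\right) $ (valid because $Mu\in A_{1}$, via the Criterion) together with $Mu\leq \lbrack u]_{A_{1}}u$ to descend back to $\left( m_{\lambda }u\right) ^{\delta }$. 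You instead establish the two-sided pointwise equivalence $u\leq m_{\lambda }u\leq \lambda ^{-1}[u]_{A_{1}}u$ directly: the lower bound is the paper's property (vi), and the upper bound $m_{\lambda }f\leq \lambda ^{-1}Mf$ follows from Chebyshev's inequality applied to the rearrangement --- an estimate the paper never isolates, since its property (viii) gives $m_{\lambda }\left( Mu\right) \leq cMu$ only for functions of the form $Mu$ and only via Lerner's inequality. You then observe that $u^{\delta }\in A_{1}$ by Jensen/H\"{o}lder and finish with the routine transfer of the $A_{1}$ property across a two-sided pointwise bound. Your argument avoids Coifman--Rochberg, Lerner's inequality, and the paper's Criterion entirely, and it yields explicit constants in terms of $\lambda $ and $[u]_{A_{1}}$ alone; the paper's version is heavier but fits its running theme that everything factors through the equivalence $m_{\lambda }\left( Mu\right) \thickapprox MMu\thickapprox Mu\thickapprox u$ for $A_{1}$ weights. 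Both arguments are valid.
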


\begin{proof}
Using that $u\in A_{1}$, then $Mu\in A_{1}$ and $m_{\lambda }\left(
Mu\right) \thickapprox M\left( Mu\right) \thickapprox Mu\thickapprox u$ and
that $\left( MMu\right) ^{\delta }\in A_{1}$ (by Coifman-Rochberg theorem)
we have the following inequalities -with multiplicative constants that we
will be renumbering -:$\ M\left( \left( m_{\lambda }u\right) ^{\delta
}\right) \leq M\left( \left( m_{\lambda }Mu\right) ^{\delta }\right) \leq
M\left( \left( C_{1}MMu\right) ^{\delta }\right) =C_{2}M\left( \left(
MMu\right) ^{\delta }\right) \leq C_{3}\left( MMu\right) ^{\delta }\leq
C_{4}\left( m_{\lambda }\left( Mu\right) \right) ^{\delta }\leq C_{5}\left(
m_{\lambda }\left( C_{4}u\right) \right) ^{\delta }=C_{5}\left( m_{\lambda
}u\right) ^{\delta }$ and then we get that $\left( m_{\lambda }u\right)
^{\delta }\in A_{1}$.
\end{proof}

\begin{remark}
It%
\'{}%
s elementary that if $v_{1},v_{2}$ are non-negative functions with $%
v_{1},v_{2}\in A_{1}$ and if $c$ and $d$ are non-negative constans then $%
cv_{1}+dv_{2}\in A_{1}$ and $[cv_{1}+dv_{2}]_{A_{1}}\leq \max \left\{
[v_{1}]_{A_{1}},[v_{2}]_{A_{2}}\right\} $.
\end{remark}

Compiling the last two lemmas, the proposition and the previous remark we
have a theorem similar to the Coifman-Rochberg result:

\begin{theorem}
(1) If $0<\delta <1$, $f\in L_{loc}^{1}\left( \mathbb{R}^{n}\right) $ and $%
u\in A_{1}$ and $c,d$ non-negative constants then $\left( c\cdot
f^{\#}\left( x\right) +d\cdot m_{\lambda }u\left( x\right) \right) ^{\delta
}\in A_{1}$.

(2) Conversely, if $w\in A_{1}$ then there are $f\in L_{loc}^{1}\left( 
\mathbb{R}^{n}\right) $, $u\in A_{1},$ non-negative constants $c\ $and $d$,
and $k\left( x\right) $with $k,k^{-1}\in L^{\infty }$ such that $w\left(
x\right) =k\left( x\right) \left( c.f^{\#}\left( x\right) ^{\delta
}+d.m_{\lambda }u\left( x\right) ^{\delta }\right) $.
\end{theorem}

\begin{proof}
The first statement is consequence of the latter remark and from the lemmas
telling us that $f^{\#}\left( x\right) ^{\delta }\ \ $and $\left( m_{\lambda
}u\left( x\right) \right) ^{\delta }$ are in $A_{1}$ for $f\in L_{loc}^{1}$
and $u\in A_{1}$.

The second was obtained in the latter proposition for $f=u=w^{\alpha }$
taking a suitable $\alpha >1$ such that $w^{\alpha }\in A_{1}$. The
existence of that $\alpha $ is guaranteed by property E.
\end{proof}

\begin{remark}
The previous result, like the Coifman-Rochberg Theorem, presents a class of
functions, included in $A_{1}$, such that any $A_{1}$ weight differs from
some element of that class only by a factor function $k\left( x\right) $
that it is bounded and bounded away from zero, that is $k,k^{-1}\in
L^{\infty }$. Another remarkable example is given by the functions in the
image of an operator obtained by means of a variant of the Rubio de Francia
algorithm.

The usual construction (see for instance [G2]) involves some sublinear
operator bounded in $L^{p}\left( \mu \right) $ with $p\geq 1$ for certain
measure $\mu $ and it is defined for $f\in L^{p}\left( \mu \right) $ by: 
\begin{equation*}
Rf\left( x\right) =\sum\limits_{k=0}^{\infty }\frac{T^{k}f\left( x\right) }{%
\left( 2\left\Vert T\right\Vert _{p,\mu }\right) ^{k}}
\end{equation*}%
where $T^{0}$ is the identity and $T^{k}=T\circ T\circ ...\circ T,$ k times.
Some basic properties of $R$ are:

i) $f\left( x\right) \leq Rf\left( x\right) $ a.e.

ii) $\left\Vert Rf\right\Vert _{p,\mu }\leq 2\left\Vert f\right\Vert _{p,\mu
}$

iii) $T\left( Rf\right) \left( x\right) \leq 2\left\Vert T\right\Vert
_{p,\mu }Rf\left( x\right) $ a.e.

For $T=M,$ the Hardy-Littlewood maximal operator and the usual Lebesgue
measure in $\mathbb{R}^{n}$ the third property means $M\left( Rf\right)
\left( x\right) \leq 2\left\Vert M\right\Vert _{p}Rf\left( x\right) $ thus $%
Rf\in A_{1}$ for any $f\in L^{p}$ with $[Rf]_{A_{1}}\leq 2\left\Vert
M\right\Vert _{p}$. For to characterize the whole $A_{1}$ might be necessary
to change this procedure for to avoid the issue about the belonging to $%
L^{p} $ (for instance if $f\in L^{1}$ then $Mf$ is never in $L^{1}$ except
when $f$ is identicaly $0$). Notwithstanding we can give the following:
\end{remark}

\begin{proposition}
$u\in A_{1}$ if and only if there are $C>0,$ $f\in L_{loc}^{1}$ and $k\left(
x\right) $ with $k,k^{-1}\in L^{\infty }$ such that $w\left( x\right)
=\sum\limits_{k=0}^{\infty }\frac{M^{k}f\left( x\right) }{C^{k}}$ is well
defined, $w\in A_{1}$ and $u\left( x\right) =k\left( x\right) \cdot w\left(
x\right) $.
\end{proposition}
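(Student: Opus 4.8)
The plan is to treat the two implications separately, with essentially all the content in the forward direction; the converse is a one-line multiplier argument.

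For the easy direction, suppose such $C$, $f$, and $k$ exist with $w=\sum_{j=0}^{\infty}M^{j}f/C^{j}\in A_{1}$ and $u=kw$. Since $k,k^{-1}\in L^{\infty}$, I would estimate, using the sublinearity and monotonicity of $M$ together with $w=k^{-1}u\leq \Vert k^{-1}\Vert_{\infty}u$,
\[
Mu=M(kw)\leq \Vert k\Vert_{\infty}Mw\leq \Vert k\Vert_{\infty}[w]_{A_{1}}w\leq \Vert k\Vert_{\infty}\Vert k^{-1}\Vert_{\infty}[w]_{A_{1}}u
\]
a.e., so that $u\in A_{1}$.

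For the forward direction, the engine is the Rubio de Francia sublinearity trick applied to the operator $M$ itself. First I would record that for any non-negative $f\in L_{loc}^{1}$ and any $C>0$ for which the series $w=\sum_{j=0}^{\infty}M^{j}f/C^{j}$ is finite a.e., the function $w$ automatically lies in $A_{1}$: since $M$ is positively homogeneous and sublinear and the terms are non-negative (so monotone convergence lets me pull $M$ through the sum term by term), one obtains
\[
Mw\leq \sum_{j=0}^{\infty}\frac{M^{j+1}f}{C^{j}}=C\sum_{i=1}^{\infty}\frac{M^{i}f}{C^{i}}=C(w-f)\leq Cw
\]
a.e., hence $[w]_{A_{1}}\leq C$. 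This mechanism is what forces the construction to produce $A_{1}$ weights, independently of the particular $f$.

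It then remains, given $u\in A_{1}$, to choose $f$ and $C$ so that the series converges and recovers $u$ up to a bounded factor. I would take $f=u$ and any $C>[u]_{A_{1}}$. The key estimate is the iterated bound $M^{j}u\leq [u]_{A_{1}}^{j}u$ a.e., proved by induction on $j$ from $Mu\leq [u]_{A_{1}}u$, using homogeneity and monotonicity of $M$: $M^{j+1}u=M(M^{j}u)\leq [u]_{A_{1}}^{j}Mu\leq [u]_{A_{1}}^{j+1}u$. Summing the geometric majorant gives $u\leq w\leq \frac{C}{C-[u]_{A_{1}}}\,u$, so the series is finite a.e. and $w\thickapprox u$ pointwise. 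Setting $k=u/w$ then yields $\frac{C-[u]_{A_{1}}}{C}\leq k\leq 1$, so $k,k^{-1}\in L^{\infty}$, while $u=kw$ holds by definition and $w\in A_{1}$ by the previous paragraph.

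The main obstacle is guaranteeing well-definedness of the series and, simultaneously, the two-sided comparison $w\thickapprox u$; both rest on the iteration inequality $M^{j}u\leq [u]_{A_{1}}^{j}u$ together with the choice $C>[u]_{A_{1}}$. Once that geometric control is secured, the $A_{1}$-membership of $w$ and the boundedness of $k$ and $k^{-1}$ are immediate.
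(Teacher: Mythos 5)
Your proof is correct and follows essentially the same route as the paper: the converse is the same one-line multiplier estimate, and the forward direction is the same Rubio de Francia iteration with $f=u$, using $M^{j}u\leq [u]_{A_{1}}^{j}u$ to get convergence and the two-sided comparison $w\thickapprox u$. The only cosmetic differences are that you allow any $C>[u]_{A_{1}}$ where the paper fixes $C=2[u]_{A_{1}}$, and you bound $[w]_{A_{1}}$ by $C$ via the telescoping $Mw\leq C(w-f)$ rather than the paper's termwise bound yielding $[w]_{A_{1}}\leq [u]_{A_{1}}$.
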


\begin{proof}
The proof is almost trivial. The "if" part is immediate because if $u\left(
x\right) =k\left( x\right) \cdot w\left( x\right) $ with $w\in A_{1}$ and $%
k,k^{-1}\in L^{\infty }$ then 
\begin{equation*}
Mu\left( x\right) \leq \left\Vert k\right\Vert _{\infty }M\left( w\right)
\left( x\right) \leq \left\Vert k\right\Vert _{\infty }[w]_{A_{1}}w\left(
x\right)
\end{equation*}%
\begin{equation*}
\leq \left\Vert k\right\Vert _{\infty }[w]_{A_{1}}\left\Vert
k^{-1}\right\Vert _{\infty }k\left( x\right) \cdot w\left( x\right) \leq
\lbrack w]_{A_{1}}\left\Vert k\right\Vert _{\infty }\left\Vert
k^{-1}\right\Vert _{\infty }u\left( x\right)
\end{equation*}%
that is $u\in A_{1}$ and $[u]_{A_{1}}\leq \lbrack w]_{A_{1}}\left\Vert
k\right\Vert _{\infty }\left\Vert k^{-1}\right\Vert _{\infty }$.

For the "only if" part let%
\'{}%
s take $f=u\in L_{loc}^{1}$ (because u is a weight), $C=2[u]_{A_{1}}$, $%
w\left( x\right) =Ru\left( x\right) :=\sum\limits_{k=0}^{\infty }\frac{%
M^{k}u\left( x\right) }{\left( 2[u]_{A_{1}}\right) ^{k}}$ and $k\left(
x\right) =\frac{u\left( x\right) }{Ru\left( x\right) }$.

Iterating we have that $M^{k}u\left( x\right) \leq \lbrack
u]_{A_{1}}^{k}u\left( x\right) $ a.e. and then 
\begin{equation*}
0\leq \frac{M^{k}u\left( x\right) }{\left( 2[u]_{A_{1}}\right) ^{k}}\leq 
\frac{u\left( x\right) [u]_{A_{1}}^{k}}{\left( 2[u]_{A_{1}}\right) ^{k}}=%
\frac{u\left( x\right) }{2^{k}}\text{ a.e.}
\end{equation*}
Thus $\sum\limits_{k=0}^{\infty }\frac{M^{k}u\left( x\right) }{\left(
2[u]_{A_{1}}\right) ^{k}}$ is convergent a.e., $w\left( x\right) =Ru\left(
x\right) =\sum\limits_{k=0}^{\infty }\frac{M^{k}u\left( x\right) }{\left(
2[u]_{A_{1}}\right) ^{k}}$ is well defined and $Mw\left( x\right) \leq
\sum\limits_{k=0}^{\infty }\frac{M^{k+1}u\left( x\right) }{\left(
2[u]_{A_{1}}\right) ^{k}}\leq \sum\limits_{k=0}^{\infty }\frac{%
[u]_{A_{1}}M^{k}u\left( x\right) }{\left( 2[u]_{A_{1}}\right) ^{k}}%
=[u]_{A_{1}}w\left( x\right) $, that is $w\in A_{1}$ and $[w]_{A_{1}}\leq
\lbrack u]_{A_{1}}$.

Finally 
\begin{equation*}
u\left( x\right) \leq w\left( x\right) =\sum\limits_{k=0}^{\infty }\frac{%
M^{k}u\left( x\right) }{\left( 2[u]_{A_{1}}\right) ^{k}}\leq
\sum\limits_{k=0}^{\infty }\frac{[u]_{A_{1}}^{k}u\left( x\right) }{\left(
2[u]_{A_{1}}\right) ^{k}}=u\left( x\right) \sum\limits_{k=0}^{\infty }\frac{1%
}{2^{k}}=2u\left( x\right)
\end{equation*}%
so $1\leq \frac{w\left( x\right) }{u\left( x\right) }\leq 2$ and then $%
k\left( x\right) =\frac{u\left( x\right) }{w\left( x\right) }$ satisfies
that $k,k^{-1}\in L^{\infty }$ with $\left\Vert k\right\Vert _{\infty }\leq
1 $ and $\left\Vert k^{-1}\right\Vert _{\infty }\leq 2$, thus $u\left(
x\right) =k\left( x\right) \cdot w\left( x\right) $ with $%
w=\sum\limits_{k=0}^{\infty }\frac{M^{k}u\left( x\right) }{\left(
2[u]_{A_{1}}\right) ^{k}}\in A_{1}$ and $k,k^{-1}\in L^{\infty }$.
\end{proof}

\begin{application}
For those weights $u$ such that $Mu\in A_{\infty }$ and hence $Mu\in A_{1}$
we can improve some known inequalities for singular integral operators. For
instance if $T$ is a Calder\'{o}n-Zygmund singular integral operator (see
[G] for a definition) the following weighted inequalities were proved for $%
1<p<\infty $ by C. P\'{e}rez ([P]) -previously J.M. Wilson obtained the
first inequality for $1<p<2$-:%
\begin{equation*}
\int_{\mathbb{R}^{n}}\left\vert Tf\right\vert ^{p}u\leq C_{p}\int_{\mathbb{R}%
^{n}}\left\vert f\right\vert ^{p}M^{[p]+1}u
\end{equation*}%
and then%
\begin{equation*}
u\left( \{x\in \mathbb{R}^{n}:\left\vert Tf\left( x\right) \right\vert
>\lambda \}\right) \leq \frac{C_{p}}{\lambda ^{p}}\int_{\mathbb{R}%
^{n}}\left\vert f\right\vert ^{p}M^{[p]+1}u
\end{equation*}%
the last one for the case $p=1$ looks:%
\begin{equation*}
u\left( \{x\in \mathbb{R}^{n}:\left\vert Tf\left( x\right) \right\vert
>\lambda \}\right) \leq \frac{C_{2}}{\lambda }\int_{\mathbb{R}%
^{n}}\left\vert f\right\vert M^{2}u
\end{equation*}%
where $[p]$ is the integer part of $p$ and $M^{k}$ is the $k-th$ iterate
composition of $M$. The strong inequality is sharp in the sense that $[p]+1$
cannot be replaced by $[p]$, and the weak case is sharp when $p$ is not an
integer and it is an open question -at our knowledge- if it is possible to
replace $M^{[p]+1}$ with $M^{[p]}$ if $p\in \mathbb{N}$ -and $M^{2}$ with $M$
in the last inequality-.

Now for a weight $u$ such that $Mu\in A_{\infty }$ we have that actually $%
Mu\in A_{1}$ and then there are a constant $C>0$ such that for almost every $%
x\in \mathbb{R}^{n}:$ $M^{2}u\left( x\right) \leq C.Mu\left( x\right) $, and
using that if in almost everywhere $f\left( x\right) \leq g\left( x\right) $
then $Mf\left( x\right) \leq Mg\left( x\right) ,$ we can iterate in $%
M^{2}u\left( x\right) \leq C.Mu\left( x\right) $ to obtain $M^{k}u\left(
x\right) \leq C^{k}.Mu\left( x\right) $, then with $C=C_{p}^{k}$ we have for
the Calder\'{o}n-Zygmund singular integral operators and the weights $u$
with $Mu\in A_{\infty }$:%
\begin{equation*}
\int_{\mathbb{R}^{n}}\left\vert Tf\right\vert ^{p}u\leq C\int_{\mathbb{R}%
^{n}}\left\vert f\right\vert ^{p}Mu
\end{equation*}%
\begin{equation*}
u\left( \{x\in \mathbb{R}^{n}:\left\vert Tf\left( x\right) \right\vert
>\lambda \}\right) \leq \frac{C}{\lambda ^{p}}\int_{\mathbb{R}%
^{n}}\left\vert f\right\vert ^{p}Mu
\end{equation*}%
for any $1<p<\infty $.
\end{application}

\bigskip

\bigskip

REFERENCES

[BS] Bennett C and Robert Sharpley R, Interpolation of Operators. Pure and
Applied Mathematics Series, Vol 129, 1988

[C-R] Coifman, R., Rochberg, R. Another characterization of B.M.O., Proc.
Amer. Math. Soc. 1980

[CU] Cruz-Uribe, D. Cruz-Uribe, SFO, Piecewise monotonic doubling measures,
Rocky Mtn. J. Math. 26 (1996),1-39.

[CU-P] D. Cruz-Uribe, SFO, P\'{e}rez C. Two weight extrapolation via the
maximal operator, Journal of Functional Analysis, 2000

[D] Duoandikoetxea, J. Fourier Ananlysis, Graduate studies in Mathematics,
AMS 2001

[DMO] Duoandikoetxea J, Mart\'{\i}n-Reyes F, Ombrosi S., On the $A_{\infty }$
conditions for general bases, Mathematische Zeitschrift, 2016

[G] Grafakos, Classical Fourier Analysis, Graduate studies in Mathematics,
Springer, 2000

[G2] Grafakos, Modern Fourier Analysis, Graduate studies in Mathematics,
Springer, 2009

[G-R] Garcia-Cuerva, J., and Rubio de Francia, J. L. Weighted Norm
Inequalities and Related Topics, North Holland, New York, 1985.

[J-T] B. Jawerth, A. Torchinsky, Local sharp maximal functions, J. Approx.
Theory 43 (1985) 231--270

[L] Lerner, A, On some pointwise estimates for maximal and singular integral
operators, Studia Math 138 (2000)

[L2] Lerner, A. On some pointwise inequalities, J. Math. Anal. Appl. 289
(2004)

[L3] Lerner, A. Some remarks on the Fefferman-Stein inequality, Journal
d'Analyse Math\'{e}matique October 2010, Volume 112, Issue 1, pp 329--349

[P] P\'{e}rez C. Weighted norm inequalities for singular integral operators.
C. P\'{e}rez. Journal of the London mathematical society 49, 1994

[S] E. Stein, Harmonic analysis real-variable methods, orthogonality, and
oscillatory integrals, Princeton Univ. Press, Princeton, NJ, 1993

[Y] Yabuta K., Sharp maximal function and Cp condition, Arch. Math. 55
(1990), 151--155.

\end{document}